\documentclass[11pt]{article}
\usepackage{amsmath,amssymb,amsthm}
\usepackage{natbib}
\usepackage{booktabs}

\theoremstyle{plain}
\newtheorem{theorem}{Theorem}[section]
\newtheorem{corollary}[theorem]{Corollary}
\newtheorem{lemma}[theorem]{Lemma}
\newtheorem{proposition}[theorem]{Proposition}

\theoremstyle{definition}

\newtheorem{definition}[theorem]{Definition}
\newtheorem{problem}[theorem]{Problem}

\theoremstyle{remark}
\newtheorem{remark}[theorem]{Remark}

\usepackage{graphicx}

\usepackage{xcolor,hyperref}
\definecolor{darkblue}{rgb}{0.0,0.0,0.6}
\hypersetup{colorlinks,breaklinks,linkcolor=darkblue,urlcolor=darkblue,anchorcolor=darkblue,citecolor=darkblue}

\begin{document}

\title{Certified High-Dimensional Wasserstein \\ Robust Portfolio Optimization}
\author{%
Chung-Han Hsieh\thanks{Corresponding author. Email: \texttt{ch.hsieh@mx.nthu.edu.tw}}
\quad
Rong Gan\thanks{Email: \texttt{GanZong@gmail.com}}\\[0.5em]
\small Department of Quantitative Finance, National Tsing Hua University\\
\small Hsinchu 30004, Taiwan}
\date{}

\maketitle

\begin{abstract}
We develop a certified, scalable approximation for high-dimensional Wasserstein distributionally robust portfolio optimization. For expected-utility maximization under order-one Wasserstein ambiguity, standard duality yields a semi-infinite convex program. For long-only portfolios with box support under the one-norm ground metric, an exact sample-specific vertex reformulation provides an exponential-size computational benchmark. We then majorize the utility by supporting hyperplanes and dualize the support subproblems, obtaining a finite hyperplane--dual formulation over compact polyhedral supports. Under the one-norm ground metric and polyhedral portfolio constraints, this formulation is a polynomial-size linear program. The uniform utility-approximation error bounds both the robust-value error and the near-optimality gap for the original robust problem. Experiments validate the certified approximation and demonstrate monthly 476-asset rebalancing and computational scalability to 1,000 assets.
\end{abstract}

    
 
\noindent\textbf{Keywords:} Robust Optimization; Wasserstein Metric; Robust Linear Program; Large-Scale Portfolio Optimization


\section{Introduction} 
\label{section: Introduction}
    Data-driven portfolio optimization must be re-solved repeatedly as new return observations become available. A portfolio decision $w\in\mathcal W$ is ideally chosen to maximize expected utility under the unknown return distribution~$\mathbb F$. In practice, however, only finitely many historical observations~$\{\widehat X_j\}_{j=1}^N$ are available, and sample average approximation (SAA) replaces~$\mathbb F$ with their empirical distribution~$\widehat{\mathbb F}$. Although SAA is asymptotically consistent under standard conditions~\cite{shapiro2021lectures}, its finite-sample optimizer can amplify estimation error and perform poorly out of sample, a longstanding concern in portfolio optimization~\cite{michaud1989markowitz,gao2023finite}.

    Distributionally robust optimization (DRO) addresses this sampling uncertainty by optimizing against the worst-case distribution in a data-centered ambiguity set~\cite{wiesemann2014distributionally,rahimian2022frameworks,kuhn2025distributionally}. We consider an order-$1$ Wasserstein ball centered at~$\widehat{\mathbb F}$. Besides supporting finite-sample guarantees and asymptotic consistency~\cite{mohajerin2018data,zhao2018data}, its transport geometry allows probability mass to move from each observation to arbitrary points in the return support, rather than merely reweighting the observed~scenarios.

    This modeling flexibility creates the central computational difficulty studied in this paper. Standard Wasserstein duality yields a semi-infinite expected-utility formulation whose constraints range over the entire return support. Our objective is to develop a scalable finite approximation with explicit guarantees relative to the original robust~objective.

\subsection{Related Work and Computational Gap}

    Several studies have explored DRO under the Wasserstein metric, laying the foundation for both theoretical developments and practical applications.
    Early foundational work established finite-sample guarantees, asymptotic consistency, and finite convex reformulations for Wasserstein-based DRO~\cite{mohajerin2018data}, while risk-averse two-stage Wasserstein DRO models were studied in~\cite{zhao2018data}, including worst-case distributions and convergence results.
    Mean-variance Wasserstein portfolio optimization and data-driven radius selection were studied in~\cite{blanchet2022distributionally}.
    Kelly-based portfolio formulations under Wasserstein ambiguity were developed in~\cite{li2023wasserstein} and later extended to settings with convex transaction costs in~\cite{hsieh2026flight}.
    A duality derivation based on the interchangeability principle was developed in~\cite{zhang2024short}.
    Most closely related to our approximation architecture, \citet{hsieh2024solving} develops a supporting-hyperplane reformulation for log-utility portfolios under polyhedral ambiguity over finitely many prescribed return scenarios.

Within the computational DRO literature, \cite{ji2020data} develop reformulation and finitely convergent bisection methods for distributionally robust reward--risk ratios, whereas~\cite{cheramin2022computationally} construct dimension-reduced inner and outer approximations under moment and Wasserstein ambiguity with optimality-gap guarantees. These methods address fractional objectives or high-dimensional ambiguity representations.

Here, Wasserstein ambiguity allows probability mass to move from each observed sample to arbitrary locations in the compact polyhedral return support, rather than merely reweighting a fixed scenario set. For concave expected-utility maximization, the resulting Wasserstein dual remains a support-indexed semi-infinite convex program. Under long-only box support with an~$\ell_1$ ground norm, an exact finite reduction is possible, but its explicit constraint representation can grow exponentially with the portfolio dimension. This leaves a gap between exactness and scalable implementation: a finite high-dimensional formulation with explicit guarantees relative to the original robust objective.

\subsection{Computational Contributions}

    We address this gap for an order-$1$ Wasserstein ambiguity set induced by a fixed $\ell_p$ ground norm and a compact polyhedral return support. The proposed hyperplane--dual formulation is a finite norm-constrained convex program. For $p=1$ and polyhedral portfolio constraints, it becomes a single linear program whose numbers of variables and constraints grow polynomially with the problem dimensions.

    Specifically, this paper makes four contributions. First, building on the standard Wasserstein dual reformulation, we develop a finite hyperplane--dual approximation for concave increasing utilities over compact polyhedral return supports. The construction combines supporting-hyperplane majorization of the utility, the dual-norm representation of the order-$1$ Wasserstein penalty, and LP duality for the support minimization.

    Second, we establish a uniform approximation certificate: the utility approximation error directly bounds both the robust-value error and the near-optimality gap for the original Wasserstein DRO~problem.

    Third, for long-only portfolios with box return support and an $\ell_1$ ground norm, we derive an exact sample--vertex reformulation of the original DRO problem. This formulation provides the benchmark used to measure the approximation fidelity of the proposed method.

    Fourth, for box return support, we establish a finite-threshold large-ambiguity characterization: once the Wasserstein radius exceeds the support diameter, the original and surrogate DRO problems share a closed-form maximin optimizer~set.

\paragraph{Notation}
For $x,y\in\mathbb R^n$, we write $\langle x,y\rangle:=x^\top y$, and all vector inequalities are understood componentwise. Fix $p\in[1,\infty]$, let $q$ be its H\"older conjugate, and write $\|\cdot\|=\|\cdot\|_p$ for the ground norm and $\|\cdot\|_*=\|\cdot\|_q$ for its dual, where $1/p+1/q=1$ with $1/\infty=0$. The symbols $\mathbf 0$ and $\mathbf 1$ denote the zero and all-ones vectors of the appropriate dimension, respectively, and $e_i$ denotes the $i$th standard basis vector.

\section{Preliminaries} 
\label{Section: Preliminaries} 
    This section introduces the necessary preliminaries for formulating the DRO~problem.

\subsection{Support and Empirical Distribution}
\label{section: data}
   The decision-making problem is driven by a random vector~$X \in \mathbb{R}^n$ with an unknown distribution~$\mathbb{F}$, supported on the polyhedral set
    \begin{align} \label{eq: compact polyhedron}
        \mathfrak{X}
            := \{x \in \mathbb{R}^n: Hx \leq h\}.
    \end{align}
    Here $H \in \mathbb{R}^{r_{\mathfrak{X}}\times n}$ and $h \in \mathbb{R}^{r_{\mathfrak{X}}}$ are chosen so that $\mathfrak{X}$ is nonempty and bounded;\footnote{
        For example, the typical box-type support $\mathfrak{X}_{\rm box}:=\{x \in \mathbb{R}^n : x_{\min} \leq x \leq x_{\max} \}$ is a special case of~\eqref{eq: compact polyhedron} when 
        $
        H :=
            \begin{bmatrix}
            I\\
            -I
            \end{bmatrix} \in \mathbb{R}^{2n \times n},
        $
        and
        $
        h :=
            \begin{bmatrix}
            x_{\max}\\
            -x_{\min}
            \end{bmatrix} \in\mathbb R^{2n}.
        $
    } $r_{\mathfrak{X}}$ denotes the number of linear inequalities used to describe the support.
    The decision maker observes $N$ independent and identically distributed (i.i.d.) random samples, denoted by $\widehat{X}_1, \ldots , \widehat{X}_N \in \mathbb{R}^n$, drawn from the true distribution~$\mathbb{F}$.
    Based on these samples, the empirical distribution, denoted by $\widehat{\mathbb{F}}$, is defined as~$
    \widehat{\mathbb{F}} := \frac{1}{N} \sum_{j=1}^N \delta_{\widehat{X}_j},
    $ 
    where~$\delta_{\widehat{X}_j}$ is the Dirac-delta~measure\footnote{
        $\delta_{\widehat{X}_j}(A) = 1$ if~$\widehat{X}_j \in A$ and $\delta_{\widehat{X}_j}(A) = 0$ otherwise, for any measurable set $A \subseteq \mathfrak{X}$.
        }
    at $\widehat{X}_j$ for~$j = 1, \cdots, N$.

\subsection{Wasserstein Ambiguity Set}
    The \emph{Wasserstein metric} measures the distance between two probability distributions defined on a given support space; see, e.g.,~\cite{mohajerin2018data,kuhn2025distributionally,hsieh2026flight}. Let $\mathcal{M}(\mathfrak{X})$ be the set of all probability distributions supported on $\mathfrak{X} \subseteq \mathbb{R}^n$.

\begin{definition}[$1$-Wasserstein Metric with $\ell_p$ Ground Norm] 
    For any two distributions $\mathbb{F}_1, \mathbb{F}_2 \in \mathcal{M}(\mathfrak{X})$, the \emph{$1$-Wasserstein metric with respect to the $\ell_p$ ground norm}, denoted by~$d_p(\mathbb{F}_1, \mathbb{F}_2) : \mathcal{M}(\mathfrak{X}) \times \mathcal{M}(\mathfrak{X}) \rightarrow \mathbb{R}_+$, is defined as:
    \begin{align*}
        d_p(\mathbb{F}_1, \mathbb{F}_2) 
        &:= 
        \inf_{\Pi \in \mathcal{C}(\mathbb{F}_1, \mathbb{F}_2)}  
        \mathbb{E}^{(X_1, X_2) \sim \Pi} \left[ \| X _1- X_2 \|_p \right]
    \end{align*}
    where $X_1 \sim \mathbb{F}_1$ and $X_2 \sim \mathbb{F}_2$, and $\mathcal{C}(\mathbb{F}_1, \mathbb{F}_2)$ denotes the set of joint distributions $\Pi$ with marginals $\mathbb{F}_1$ and $\mathbb{F}_2$.
\end{definition}

\begin{definition}[$1$-Wasserstein Ambiguity Set] 
    Given a radius~$\varepsilon > 0$, the \emph{$1$-Wasserstein ambiguity set induced by the $\ell_p$ ground norm} centered at~$\widehat{\mathbb{F}}$, denoted by~$\mathcal{B}_{\varepsilon} (\widehat{\mathbb{F}})$, is defined~as
    $
        \mathcal{B}_{\varepsilon} ( \widehat{\mathbb{F}} ) 
            := \big\{ \mathbb{F} \in \mathcal{M}(\mathfrak{X}) : d_p(\mathbb{F}, \widehat{\mathbb{F}}) \leq \varepsilon \big\}.
    $
\end{definition}

\subsection{Distributionally Robust Optimization Problem}
    Observing only empirical data $\widehat{\mathbb{F}}$, the decision maker adopts a DRO approach, selecting a decision vector $w$ from a nonempty compact polyhedral feasible set~$\mathcal{W} \subseteq \mathbb{R}^n$ that maximizes the worst-case expected utility across all distributions in the Wasserstein ambiguity set $\mathcal{B}_{\varepsilon} (\widehat{\mathbb{F}})$. Let $U(\cdot)$ be a concave, strictly increasing, and continuously differentiable utility function. In the portfolio problem considered here, utility depends on the decision and return only through the scalar portfolio return~$\langle w,X\rangle$. 

\begin{problem}[Distributionally Robust Optimization Problem]
    Given a radius $\varepsilon > 0$, the \emph{distributionally robust optimization} (DRO) problem is formulated as:
    \begin{align}\label{eq: DRO problem}
        \sup_{w \in \mathcal{W}} \,
        \inf_{\mathbb{F} \in \mathcal{B}_{\varepsilon}(\widehat{\mathbb{F}})}
        \mathbb{E}^{\mathbb{F}} \left[ 
            U( \langle w, X \rangle )\right].
    \end{align}
\end{problem}

\begin{remark}[Extreme Cases of the Wasserstein DRO]
    When $\varepsilon = 0$, the Wasserstein ambiguity set reduces to an empirical distribution, i.e., 
    $\mathcal{B}_{\varepsilon}(\widehat{\mathbb{F}}) = \{ \widehat{\mathbb{F}} \}$, and the DRO problem~\eqref{eq: DRO problem} coincides with the sample average approximation (SAA).
    When~$\varepsilon \uparrow \infty$, the ambiguity set $\mathcal{B}_{\varepsilon}(\widehat{\mathbb{F}}) \uparrow \mathcal{M}(\mathfrak{X})$.
    In this case, the DRO problem reduces to a robust optimization (RO) formulation; see \cite{zhao2018data}.
\end{remark}

\section{Main Results}
\label{Section: Theoretical Reformulation}
This section first derives a semi-infinite convex reformulation
of~\eqref{eq: DRO problem}. Under long-only box support with an
$\ell_1$ ground norm, we also obtain an exact sample--vertex
reformulation that serves as a computational benchmark. 
Our main result constructs a supporting-hyperplane surrogate and derives its finite hyperplane--dual formulation over compact polyhedral return supports.
When $p=1$ and the portfolio constraints are polyhedral, the resulting
formulation is a polynomial-size linear program. We then show that the
uniform utility-approximation error bounds both the robust-value error
and the near-optimality gap for the original DRO problem. Finally, for
box support, we establish the finite-threshold large-ambiguity
characterization.

\begin{lemma}[Semi-Infinite Convex Reformulation]
    \label{lemma: A Convex Reformulation of the DRO Problem} 
    Given observed sample vectors $\widehat{x}_1, \cdots, \widehat{x}_N$, for any $\varepsilon > 0$, the DRO problem~\eqref{eq: DRO problem} admits an equivalent semi-infinite convex reformulation: 
    \begin{align}\label{eq: convex DRO}
        &\displaystyle
        \sup_{\substack{w\in\mathcal{W},\ \lambda\ge0\\ a_j\in\mathbb R,\ j=1,\dots,N}}
        -\lambda\varepsilon + \frac{1}{N} \sum_{j=1}^{N} a_j\\
        &{\rm s.t.} \; 
           U \left( \langle w, x \rangle \right) + \lambda  \|x-\widehat{x}_j\|  \geq a_j,\quad j = 1, \dots, N, \; x \in \mathfrak{X} \notag
    \end{align}
\end{lemma}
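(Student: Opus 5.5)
The plan is to fix $w\in\mathcal W$ and apply strong duality for the inner worst-case expectation. This is the standard Wasserstein DRO duality of \cite{mohajerin2018data}, see also \cite{zhang2024short,kuhn2025distributionally}. Once the inner problem has been dualized for each fixed $w$, the supremum over $w$ and over the dual variables merges into a single supremum.

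\textbf{Step 1 (inner infimum as a loss maximization).} Set $\ell_w(x):=-U(\langle w,x\rangle)$. Then
\[
\inf_{\mathbb F\in\mathcal B_\varepsilon(\widehat{\mathbb F})}\mathbb E^{\mathbb F}[U(\langle w,X\rangle)]=-\sup_{\mathbb F\in\mathcal B_\varepsilon(\widehat{\mathbb F})}\mathbb E^{\mathbb F}[\ell_w(X)].
\]
The function $\ell_w$ is continuous on the compact polyhedron $\mathfrak X$, hence bounded and upper semicontinuous. The growth condition in the duality theorem is therefore trivially satisfied.

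\textbf{Step 2 (dualize).} I will invoke strong duality for $1$-Wasserstein balls on a compact support. For $\varepsilon>0$,
\[
\sup_{\mathbb F\in\mathcal B_\varepsilon}\mathbb E^{\mathbb F}[\ell_w]=\inf_{\lambda\ge 0}\Big\{\lambda\varepsilon+\frac1N\sum_{j=1}^N\sup_{x\in\mathfrak X}\big[\ell_w(x)-\lambda\|x-\widehat x_j\|\big]\Big\}.
\]
I rely on the cited result here. A self-contained route would decompose any feasible $\mathbb F$ into conditional distributions $\mathbb F_j$ attached to the samples, which rewrites the primal as a moment problem with one transport-budget constraint. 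Weak duality is then immediate from the Lagrangian. Strong duality holds because the primal value is concave in $\varepsilon$ and finite, and Slater's condition holds since $\varepsilon>0$ (for example, $\mathbb F=\widehat{\mathbb F}$ is strictly feasible when the samples lie in $\mathfrak X$).

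\textbf{Step 3 (negate and introduce epigraph variables).} Negating gives
\[
\sup_{\lambda\ge0}\Big\{-\lambda\varepsilon+\frac1N\sum_j\inf_{x\in\mathfrak X}\big[U(\langle w,x\rangle)+\lambda\|x-\widehat x_j\|\big]\Big\}.
\]
Next, replace each inner infimum by a variable $a_j$ subject to $a_j\le U(\langle w,x\rangle)+\lambda\|x-\widehat x_j\|$ for all $x\in\mathfrak X$. At the optimum each $a_j$ equals its infimum, and this infimum is attained and finite by compactness, so the values coincide. Taking $\sup_{w\in\mathcal W}$ outside and merging the suprema gives \eqref{eq: convex DRO}.

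\textbf{Step 4 (convexity).} The objective is linear. For each fixed $x$, the constraint function $(w,\lambda,a_j)\mapsto U(\langle w,x\rangle)+\lambda\|x-\widehat x_j\|-a_j$ is concave, because $U$ is concave composed with a linear map. The feasible set is therefore an intersection of convex superlevel sets, so the problem is a semi-infinite convex program.

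The main obstacle is Step 2: justifying strong duality and that no duality gap arises. I expect to handle it by citing \cite{mohajerin2018data} or \cite{zhang2024short}, where the compactness of $\mathfrak X$ and the continuity of $U$ give the needed regularity.
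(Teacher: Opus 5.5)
Your proposal is correct and follows the same route as the paper, whose proof simply invokes the Wasserstein duality argument of \cite[Theorem~4.2]{mohajerin2018data}. Your Steps 1--4 make explicit what the citation leaves implicit: the sign flip to a loss, the epigraph variables $a_j$, and the convexity check. In your favor, only the semi-infinite duality step of that theorem is needed here, since its finite conclusion assumes a maximum of concave loss pieces whereas $-U(\langle w,\cdot\rangle)$ is convex; your one-constraint Lagrangian argument with Slater at $\varepsilon>0$ on the compact support is what justifies that step.
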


\begin{proof}
    The result follows the duality argument in \cite[Theorem 4.2]{mohajerin2018data}.  
\end{proof}

\begin{remark}
  The reformulation in Lemma~\ref{lemma: A Convex Reformulation of the DRO Problem} is convex but semi-infinite. Proposition~\ref{prop: exact sample vertex reformulation} shows that, under the long-only, box-support, and $\ell_1$ structure, it admits an exact finite reduction.
\end{remark}

\begin{proposition}[Exact Sample-Vertex Reformulation]
\label{prop: exact sample vertex reformulation}
    Suppose that the ground norm is $\ell_1$, the support is the box
    $\mathfrak{X}:=\{x\in\mathbb R^n:x_{\min}\leq x\leq x_{\max}\}$ with $
    x_{\min},x_{\max}\in\mathbb R^n$, and
    $\mathcal{W}\subseteq\mathbb R_+^n$. For each sample
    $\widehat{x}_j\in\mathfrak{X}$, define a sample-specific vertex set
    $
    \mathcal V_j
        :=
        \left\{
            v\in\mathbb R^n:
            v_i\in\{(x_{\min})_i, \ (\widehat{x}_j)_i\},
            \ i=1,\dots,n
        \right\}.
    $
    Then the original DRO problem~\eqref{eq: DRO problem} is equivalent to
    \begin{align}
       & \sup_{\substack{w\in\mathcal{W},\ \lambda\geq0\\a_j\in\mathbb R}}
        -\lambda\varepsilon+\frac1N\sum_{j=1}^N a_j
        \label{eq: exact sample vertex DRO}\\
       & \mathrm{s.t.}\;
        U(\langle w,v\rangle)
        +\lambda\|v-\widehat{x}_j\|_1
        \geq a_j,
        \quad v\in\mathcal V_j,\ j=1,\dots,N. \notag
    \end{align}
    Hence, the original DRO admits an exact finite convex reformulation
    with at most $N2^n$ sample--vertex constraints.
\end{proposition}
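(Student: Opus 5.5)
Starting from Lemma~\ref{lemma: A Convex Reformulation of the DRO Problem}, for fixed $(w,\lambda)$ and each $j$ the best feasible $a_j$ is
\[
a_j^\star(w,\lambda)=\min_{x\in\mathfrak X}\phi_j(x),\qquad \phi_j(x):=U(\langle w,x\rangle)+\lambda\|x-\widehat x_j\|_1 .
\]
It therefore suffices to show, for every $w\in\mathcal W\subseteq\mathbb R^n_+$ and $\lambda\ge0$, that this minimum is attained on the finite set $\mathcal V_j\subseteq\mathfrak X$, i.e.\ $\min_{x\in\mathfrak X}\phi_j=\min_{v\in\mathcal V_j}\phi_j(v)$. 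Since $\mathcal V_j\subseteq\mathfrak X$, the constraints of \eqref{eq: exact sample vertex DRO} are a subset of those in \eqref{eq: convex DRO}, which gives one inequality between the optimal values. The attainment claim gives the reverse inequality: every constraint in \eqref{eq: convex DRO} is implied by the vertex constraints. Convexity of \eqref{eq: exact sample vertex DRO} holds because each constraint is concave in $(w,\lambda)$ minus linear in $a_j$, so it defines a convex feasible set for the maximization. The count $|\mathcal V_j|\le 2^n$ yields at most $N2^n$ constraints.

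The first step reduces to the lower orthant. Given any $x\in\mathfrak X$, set $y:=\min(x,\widehat x_j)$ componentwise, so $y\in\mathfrak X$ and $x_{\min}\le y\le\widehat x_j$. Because $w\ge0$, we have $\langle w,y\rangle\le\langle w,x\rangle$, and since $U$ is increasing, $U(\langle w,y\rangle)\le U(\langle w,x\rangle)$. Moreover $|y_i-(\widehat x_j)_i|\le|x_i-(\widehat x_j)_i|$ for each coordinate: the two are equal when $x_i\le(\widehat x_j)_i$, and the left side is $0$ otherwise. Hence $\phi_j(y)\le\phi_j(x)$, and we may restrict the minimization to the box $B_j:=\{x_{\min}\le y\le\widehat x_j\}$.

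The second step uses concavity on $B_j$. There $\|y-\widehat x_j\|_1=\langle\mathbf 1,\widehat x_j-y\rangle$ is affine, and $U(\langle w,y\rangle)$ is concave as a concave function of an affine map. So $\phi_j$ is concave on the polytope $B_j$, and a concave function on a polytope attains its minimum at an extreme point. The extreme points of $B_j$ are exactly the vectors with $y_i\in\{(x_{\min})_i,(\widehat x_j)_i\}$, which is $\mathcal V_j$; degenerate coordinates with $(x_{\min})_i=(\widehat x_j)_i$ only reduce the count. Existence of minimizers follows from continuity of $\phi_j$ and compactness of $B_j$.

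The main subtlety is the reduction step. The $\ell_1$ norm is not affine on all of $\mathfrak X$, so concavity fails globally, and the sign condition $w\ge0$ is exactly what allows us to discard the region above $\widehat x_j$. I will check that the componentwise minimum argument needs $U$ only nondecreasing and $\lambda\ge0$. Finally, I will confirm equivalence at the level of the $\sup$ over $w$ as well: $\sup_{a}$ of the feasible objective equals $-\lambda\varepsilon+\frac1N\sum_j a_j^\star(w,\lambda)$ in both problems, so the values coincide for each $(w,\lambda)$, and optimal $w$'s coincide too.
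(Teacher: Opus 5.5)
Your proposal is correct and follows essentially the same route as the paper. You replace any $x\in\mathfrak X$ by the componentwise minimum $\min\{x,\widehat x_j\}$, using $w\geq 0$, monotonicity of $U$, and $\lambda\geq 0$, to restrict the inner minimization to the box $[x_{\min},\widehat x_j]$; there the $\ell_1$ term is affine, so $\phi_j$ is concave and attains its minimum at an extreme point, i.e., on $\mathcal V_j$. Your extra steps (the two-sided comparison of constraint sets, convexity of the vertex constraints, and the $N2^n$ count) spell out details the paper leaves implicit but do not change the argument.
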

\begin{proof}
    See Appendix~\ref{appendix: technical proofs}.
\end{proof}

\begin{remark}
    Proposition~\ref{prop: exact sample vertex reformulation}
    provides an exact finite reduction, but its explicit representation contains up to $N2^n$ sample--vertex constraints. Thus, a direct implementation based on full vertex enumeration has exponential worst-case size in~$n$. The supporting-hyperplane construction developed below instead yields a single polynomial-size finite formulation. Section~\ref{sec: Computational Scalability} describes an exact constraint-generation benchmark that avoids full vertex~enumeration.
\end{remark}

\subsection{Supporting Hyperplane Approximation} 
\label{subsection: supporting hyperplane approximation}
    To obtain a formulation whose size grows polynomially with the problem dimensions, we adopt the supporting-hyperplane approximation developed in \cite{hsieh2024solving}. This construction applies to compact polyhedral supports and a fixed $\ell_p$ ground norm, and leads to the finite norm-constrained convex formulation developed below. For $p=1$, the resulting formulation is a linear~program.

The supporting hyperplane approximation approach begins by identifying~$M$ partition points within the range of inputs for the utility function~$U(\langle w, x\rangle)$. Subsequently,~$M$ hyperplanes are constructed as a surrogate function to approximate the utility function.

For any $w\in\mathcal{W}$ and $x\in\mathfrak{X}$, let $y := \langle w, x \rangle$.
Since $\mathcal{W}$ and~$\mathfrak{X}$ are compact,~$y$ also lies in the compact~interval
\[
\underline{y} := \min_{w\in\mathcal{W},\; x \in \mathfrak{X}} \langle w,x\rangle,
\qquad
\overline{y} := \max_{w\in\mathcal{W},\; x \in \mathfrak{X}} \langle w,x\rangle .
\]
Define an auxiliary function $f:[\underline{y}, \overline{y}] \rightarrow \mathbb{R}$ by $f(y):=U(y)$.
Assume that~$f$ is continuously differentiable, strictly increasing, and concave.
According to~\cite{hsieh2024solving}, the supporting hyperplane method approximates $f$ by partitioning the interval $[\underline{y},\overline{y}]$ and constructing the corresponding tangent hyperplanes.

Specifically, for an integer $M \geq 2$, choose the partition points $\{ y_m\}_{m=1}^{M}$ such that
$
    \underline{y} = y_1 < y_2 < \dots < y_M = \overline{y}.
$
For each $m=1, \dots, M$, define the affine function
$
    h_m (y) := \alpha_m y + \beta_m , 
$
where $\alpha_m$ and $\beta_m$ are the slope and the intercept coefficients, respectively, satisfying
\begin{align*}
    \alpha_m := f'(y_m) 
    \; \text{ and } \;
    \beta_m := f(y_m) - \alpha_m y_m.
\end{align*}
Equivalently,
$
    h_m(y)=f(y_m)+f'(y_m)(y-y_m).
$
Hence, $h_m$ is tangent to $f$ at $y_m$ and satisfies $h_m(y_m) = f(y_m)$ for each $m = 1, \dots, M$. Moreover, by the concavity of~$f$, we have 
$
f(y)\leq h_m(y)$ for $y \in [\underline{y}, \overline{y}]$ and $
m=1,\dots,M.$
Consequently,
\[
    f(y)\leq \min_{1\leq m \leq M} h_m(y),
    \qquad
    y \in [\underline{y}, \overline{y}].
\]
Thus, the utility function admits the upper approximation 
\begin{align} \label{eq: supporting hyperplane approximation to the utility}
    U( \langle w, x \rangle )  
    = 
    f( \langle w, x \rangle ) 
    \approx 
    \min_{1 \leq m \leq M} \left[ \alpha_m \langle w ,x \rangle + \beta_m \right]. 
\end{align}

\paragraph{Approximation Error}  
The approximation error can be characterized uniformly on $[\underline{y}, \overline{y}]$.
Suppose, in addition, that $f'$ is Lipschitz continuous on this interval with Lipschitz constant~$L_f$.
Define
\begin{align} \label{eq: hyperplane surrogate}
    \widehat{U}_M(y):=\min_{1\leq m\leq M}(\alpha_m y+\beta_m),
\end{align}
and 
\[
    \Delta_y := \max_{m=1,\dots,M-1}(y_{m+1}-y_m).
\]
Applying the tangent-majorant estimate in~\cite[Lemma~3.1]{hsieh2024accelerating} on each subinterval $[y_m, y_{m+1}] $ yields
\[
    0 \leq \widehat{U}_M(y) - f(y) \leq \frac{L_f}{8} \Delta_y^2,
    \qquad
    \forall y \in  [\underline{y},\overline{y}].
\]
Thus, for any tolerance $\eta > 0$, choosing $\Delta_y\leq \sqrt{ \frac{8\eta}{L_f}}$ guarantees
\begin{align} \label{ineq: eta-uniform bound}
    0 \leq \widehat{U}_M(y)-f(y) \leq \eta,
    \qquad
    \forall y\in[\underline{y},\overline{y}].
\end{align}
Hence, we define
{\small \[
    \begin{aligned}
    M_\eta^*
    :=
    \min\Big\{ M \geq 2 : \; &
    \exists\ \underline{y}=y_1<\cdots<y_M=\overline{y} \quad
    \text{s.t. }
    0\leq \widehat{U}_M(y)-f(y) \leq \eta,\ 
    \forall y\in[\underline{y},\overline{y}]
    \Big\}.
    \end{aligned}
\]
}The mesh condition
$
    \Delta_y\leq \sqrt{8\eta/L_f}
$
is a simple sufficient condition for the selected hyperplanes to achieve uniform error at most~$\eta$. 

\begin{problem}[Supporting-Hyperplane DRO Surrogate] 
    \label{problem: supporting hyperplane DRO surrogate}
    For a prescribed tolerance $\eta > 0$, let~$\widehat{U}_{M_\eta^*}$ be the supporting-hyperplane surrogate with uniform error at most $\eta$ constructed above. We refer~to
    \[
    \widehat{V}_\eta^\star(\varepsilon)
    :=
    \sup_{w \in \mathcal{W}} \,
    \inf_{\mathbb{F}\in\mathcal{B}_\varepsilon(\widehat{\mathbb{F}})}
    \mathbb{E}^{\mathbb{F}} \left[ \widehat{U}_{M_\eta^*}(\langle w,X\rangle) \right]
    \]
    as the surrogate Wasserstein DRO problem.
\end{problem}

\subsection{Finite Convex Approximation via Supporting Hyperplanes}
    We now apply the hyperplane approximation framework to the Wasserstein DRO problem. For a fixed $\ell_p$ ground norm, this yields a finite norm-constrained convex program; in the computationally central case $p=1$, the formulation reduces to a robust linear program.

\begin{theorem}[Finite Hyperplane-Dual Reformulation]
\label{theorem: DRO via hyperplane}
    Given a hyperplane approximation tolerance~$\eta > 0$ and the corresponding optimal number of hyperplanes $M_\eta^*$, the surrogate Wasserstein DRO Problem~\ref{problem: supporting hyperplane DRO surrogate} admits the following finite hyperplane-dual reformulation:
    \begin{align}
    & \sup_{\substack{
    w\in\mathcal{W},\ \lambda \geq 0,\\ a_j, \; 
    \mu_j^m \geq 0\; \forall j,m
    }}
    -\lambda\varepsilon+\frac{1}{N}\sum_{j=1}^N a_j \label{eq: DRO hyperplane lemma}\\
    {\rm s.t.}\quad
    &\alpha_m \langle w, \widehat{x}_j\rangle+\beta_m
        +\langle \mu_j^m,H\widehat{x}_j-h\rangle
        \geq a_j,
        \qquad j\leq N,\ m \leq M_\eta^*; \notag\\
    &\|-\alpha_m w-H^\top\mu_j^m\|_*
    \le \lambda,\qquad j\le N,\ m \leq M_\eta^*. \notag
\end{align} 
    where $\|\cdot\|_*=\|\cdot\|_q$ is the dual norm of the chosen $\ell_p$ ground norm, and $\mu_j^m\in\mathbb R^{r_{\mathfrak{X}}}_+$ is the support-dual variable associated with the sample--hyperplane pair $(j,m)$ with $j = 1, \dots, N$ and $m = 1, \dots, M_\eta^*$.
\end{theorem}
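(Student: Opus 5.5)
We start from the semi-infinite dual of Lemma~\ref{lemma: A Convex Reformulation of the DRO Problem}, applied with $U$ replaced by the surrogate $\widehat U_{M_\eta^*}$. The duality argument of \cite[Theorem 4.2]{mohajerin2018data} only needs the integrand to be a pointwise minimum of finitely many affine functions of $x$ on a compact polyhedron, and $\widehat U_{M_\eta^*}(\langle w,x\rangle)$ is exactly such a function. So the surrogate problem equals
\begin{align*}
\sup_{w\in\mathcal W,\ \lambda\ge 0,\ a_j}\ -\lambda\varepsilon+\frac1N\sum_{j=1}^N a_j
\quad\text{s.t.}\quad
\inf_{x\in\mathfrak X}\Big[\min_{m}\big(\alpha_m\langle w,x\rangle+\beta_m\big)+\lambda\|x-\widehat x_j\|\Big]\ge a_j,\quad j\le N.
\end{align*}

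\textbf{Step 1: separate the hyperplanes.} The inner minimum over $m$ commutes with the infimum over $x$. Hence each sample constraint splits into $M_\eta^*$ constraints of the form
\[
\inf_{x\in\mathfrak X}\big[\alpha_m\langle w,x\rangle+\beta_m+\lambda\|x-\widehat x_j\|\big]\ge a_j .
\]

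\textbf{Step 2: dual-norm representation.} For fixed $(j,m)$, write
\[
\lambda\|x-\widehat x_j\|=\max_{\|z\|_*\le\lambda}\langle z,x-\widehat x_j\rangle .
\]
The left side of each constraint is then $\inf_{x\in\mathfrak X}\max_{\|z\|_*\le\lambda}$ of a function that is bilinear in $(x,z)$, over compact convex sets. Sion's minimax theorem lets us swap the infimum and the maximum, giving
\[
\max_{\|z\|_*\le\lambda}\Big[\beta_m-\langle z,\widehat x_j\rangle+\min_{x\in\mathfrak X}\langle \alpha_m w+z,x\rangle\Big].
\]

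\textbf{Step 3: LP duality for the support minimization.} Since $\mathfrak X=\{Hx\le h\}$ is nonempty and bounded, strong LP duality gives
\[
\min_{x\in\mathfrak X}\langle c,x\rangle=\max\{-\langle \mu,h\rangle:\ \mu\ge 0,\ H^\top\mu=-c\},
\]
with $c=\alpha_m w+z$. Substituting $z=-\alpha_m w-H^\top\mu$ eliminates $z$. The objective becomes
\[
\beta_m+\langle \alpha_m w+H^\top\mu,\widehat x_j\rangle-\langle\mu,h\rangle
=\alpha_m\langle w,\widehat x_j\rangle+\beta_m+\langle\mu,H\widehat x_j-h\rangle,
\]
and the constraint becomes $\|-\alpha_m w-H^\top\mu\|_*\le\lambda$.

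\textbf{Step 4: absorb the existential.} The requirement ``there exists $\mu_j^m\ge0$ achieving at least $a_j$'' can be absorbed into the outer supremum by promoting $\mu_j^m$ to decision variables. This is valid because the inner maxima are attained (or at least approached, which suffices since the outer problem is a supremum). The result is exactly \eqref{eq: DRO hyperplane lemma}.

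The main obstacle is justifying the minimax interchange and the attainment used in Steps 2 to 4. I would handle it through compactness of $\mathfrak X$ and of the dual-norm ball $\{\|z\|_*\le\lambda\}$, together with linearity in each argument, so that Sion's theorem applies. A subsidiary point is confirming that Lemma~\ref{lemma: A Convex Reformulation of the DRO Problem} transfers to the nonsmooth surrogate. This should follow because \cite{mohajerin2018data} already treats piecewise-affine losses, here in their concave-reward (max-of-negatives) form.
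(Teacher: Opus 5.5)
Your proposal is correct and follows the paper's proof essentially step for step: you apply the semi-infinite dual to the surrogate, split the finite minimum over $m$ into $M_\eta^*$ constraints, write $\lambda\|x-\widehat x_j\|$ via the dual-norm ball, and swap $\inf_x$ and $\max_z$ by Sion's theorem using compactness. You then dualize the support LP over $\{Hx\le h\}$ and eliminate $z_j^m=-\alpha_m w-H^\top\mu_j^m$. Your one addition is the explicit check that the semi-infinite reformulation, stated for differentiable $U$, still applies to the nonsmooth piecewise-affine surrogate; the paper passes over this silently, and your justification via the piecewise-affine case of \cite{mohajerin2018data} is appropriate.
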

\begin{proof}
    See Appendix~\ref{appendix: technical proofs}.
\end{proof}

\begin{corollary}[Box-Specialized Hyperplane-Dual Reformulation]
\label{corollary: box-specialized HYP reformulation}
Suppose that the ground norm is $\ell_1$, the support is the box
$
    \mathfrak{X}
    = \{x\in\mathbb R^n:x_{\min}\leq x\leq x_{\max}\},
$
and $\mathcal W\subseteq\mathbb R_+^n$.
Then the surrogate Wasserstein DRO problem in Theorem~\ref{theorem: DRO via hyperplane} reduces to
\begin{align}
    &\sup_{\substack{
        w\in\mathcal W,\ \lambda\geq0,\\
        a_j\in\mathbb R,\;
        s^m\in\mathbb R_+^n
    }}
    -\lambda\varepsilon+\frac1N\sum_{j=1}^N a_j
    \label{eq: box-specialized HYP reformulation}\\
    \mathrm{s.t.}\quad
    &\alpha_m\langle w,\widehat x_j\rangle+\beta_m
    -\langle\widehat x_j-x_{\min},s^m\rangle
    \geq a_j,
    \quad j=1,\dots,N,\quad m=1,\dots,M_\eta^*,
    \notag\\
    &s^m\geq\alpha_m w-\lambda\mathbf 1,
    \quad m=1,\dots,M_\eta^*.
    \notag
\end{align}
If $\mathcal W$ is polyhedral, this is a linear program with $n+1+N+nM_\eta^*$ scalar decision variables.
\end{corollary}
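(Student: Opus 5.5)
The plan is to specialize Theorem~\ref{theorem: DRO via hyperplane} to the box data and then eliminate the support-dual variables in closed form. For the box we take $H=\begin{bmatrix} I\\ -I\end{bmatrix}$ and $h=\begin{bmatrix} x_{\max}\\ -x_{\min}\end{bmatrix}$, as in the footnote to~\eqref{eq: compact polyhedron}. We split each $\mu_j^m=(\mu_j^{m,+},\mu_j^{m,-})\in\mathbb R^n_+\times\mathbb R^n_+$. Since $p=1$, the dual norm is $\|\cdot\|_\infty$.

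The two constraints of~\eqref{eq: DRO hyperplane lemma} then read
\[
\alpha_m\langle w,\widehat x_j\rangle+\beta_m-\langle \widehat x_j-x_{\min},\mu_j^{m,-}\rangle-\langle x_{\max}-\widehat x_j,\mu_j^{m,+}\rangle\ge a_j
\]
and
\[
-\lambda\mathbf 1\le \alpha_m w+\mu_j^{m,+}-\mu_j^{m,-}\le \lambda\mathbf 1 .
\]

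Now fix $(w,\lambda)$ and a pair $(j,m)$. The best choice of $\mu_j^m$ maximizes the left-hand side of the first constraint. Because $\widehat x_j\in\mathfrak X$, both weight vectors $\widehat x_j-x_{\min}$ and $x_{\max}-\widehat x_j$ are componentwise nonnegative. The problem therefore separates by coordinate: minimize a nonnegative-weighted sum of $\mu^{+}_i,\mu^{-}_i\ge0$ subject to $\alpha_m w_i-\lambda\le \mu^-_i-\mu^+_i\le \alpha_m w_i+\lambda$.

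We claim the choice $\mu^{+}=0$, $\mu^-=(\alpha_m w-\lambda\mathbf 1)_+$ is optimal, and it does not depend on $j$.
\begin{itemize}
\item It is feasible. The lower bound holds trivially. The upper bound holds because $\alpha_m=f'(y_m)>0$ and $w\ge0$ (since $\mathcal W\subseteq\mathbb R^n_+$), so $(\alpha_m w_i-\lambda)_+\le\alpha_m w_i+\lambda$.
\item It is minimal. Any feasible point has $\mu^-_i\ge \mu^+_i+\alpha_m w_i-\lambda\ge \alpha_m w_i-\lambda$ and $\mu_i^-\ge 0$, so $\mu^-\ge(\alpha_m w-\lambda\mathbf 1)_+$. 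Together with $\mu^+\ge0$ and the nonnegative weights, this gives the claim.
\end{itemize}

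Equivalence with~\eqref{eq: box-specialized HYP reformulation} then follows in both directions.
\begin{itemize}
\item From the theorem to the corollary: given a feasible point of~\eqref{eq: DRO hyperplane lemma}, set $s^m:=(\alpha_m w-\lambda\mathbf 1)_+$. The first constraint of the corollary follows from the minimality bound just shown, and the second holds by construction.
\item From the corollary to the theorem: given a feasible $(w,\lambda,a,s)$, replace $s^m$ by $\tilde s^m:=(\alpha_m w-\lambda\mathbf 1)_+$. We have $0\le\tilde s^m\le s^m$, so the first constraint still holds, since $\widehat x_j-x_{\min}\ge0$. Setting $\mu_j^{m,-}=\tilde s^m$ and $\mu_j^{m,+}=0$ for every $j$ then gives a feasible point of~\eqref{eq: DRO hyperplane lemma}.
\end{itemize}
The objective $-\lambda\varepsilon+\frac1N\sum_j a_j$ is unchanged in both directions, so the optimal values coincide.

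The step needing the most care is the upper side of the $\infty$-norm constraint. It is the only place where the long-only assumption and the positivity of $\alpha_m$ enter, and it is what allows the constraint $s^m\le\alpha_m w+\lambda\mathbf 1$ to be dropped.

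Finally, counting variables gives $w\in\mathbb R^n$, $\lambda\in\mathbb R$, $a\in\mathbb R^N$ and $s^m\in\mathbb R^n$ for $m=1,\dots,M_\eta^*$, for a total of $n+1+N+nM_\eta^*$. All constraints are linear in these variables. Hence, when $\mathcal W$ is polyhedral, the reformulation is a linear program.
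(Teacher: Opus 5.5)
Your proposal is correct and follows essentially the same route as the paper's proof: specialize $(H,h)$ to the box, split $\mu_j^m$ into upper- and lower-bound multipliers, show that $\mu^{+}=\mathbf 0$ and $\mu^{-}=(\alpha_m w-\lambda\mathbf 1)_+$ is optimal and independent of $j$, and then introduce $s^m$ as an epigraph variable for this positive part. Your explicit two-directional equivalence and your check of the upper side of the $\ell_\infty$ constraint make rigorous the step the paper calls ``readily verified''; that check needs only $\alpha_m\geq0$ (automatic for increasing $U$), together with $w\geq\mathbf 0$ and $\lambda\geq0$.
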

\begin{proof}
    See Appendix~\ref{appendix: technical proofs}.
\end{proof}

\begin{remark}[Formulation Size and Approximation Accuracy]
\label{remark: LP size}
    Let $M:=M_\eta^*$ and suppose that the feasible set $\mathcal{W}$ is described by $r_{\mathcal{W}}$ affine constraints. For a polyhedral support~\eqref{eq: compact polyhedron}, the finite formulation in Theorem~\ref{theorem: DRO via hyperplane} has
    $
        n+1+N+r_{\mathfrak{X}}NM
    $
    scalar decision variables: $n$ portfolio weights, one Wasserstein dual variable $\lambda$, $N$ epigraph variables $a_j$, and $r_{\mathfrak{X}}NM$ support-dual variables~$\mu_j^m$.

    For the computationally central case $p=1$, the dual norm is $\ell_\infty$, so each constraint
    $
        \|-\alpha_m w-H^\top\mu_j^m\|_\infty\leq \lambda
    $   
    is equivalent to $2n$ scalar linear inequalities. Hence, excluding the constraints defining $\mathcal{W}$, the formulation contains~$NM$ robust-value constraints, $2nNM$ norm-dual inequalities, and~$r_{\mathfrak{X}}NM$ nonnegativity constraints on the support-dual variables.

Consequently, for fixed support description size $r_{\mathfrak{X}}$, the formulation grows polynomially, and in fact linearly in each of $n$, $N$, and $M_\eta^*$ when the others are fixed. 
For a uniform partition with mesh size $\Delta_y$, the number of hyperplanes satisfies $
M = O\!\left(\frac{\overline{y}-\underline{y}}{\Delta_y}\right).
$
Since the sufficient condition~$\Delta_y \leq \sqrt{8\eta/L_f}$, developed in Section~\ref{subsection: supporting hyperplane approximation}, guarantees approximation error at most~$\eta$, we have
$
    M_\eta^*
    =
    O\!\left((\overline{y} - \underline{y}) \sqrt{L_f/\eta}\right).
$
Hence, for fixed support range and curvature constant, the $p=1$ robust linear formulation has size
\[
    O\!\left((n+r_{\mathfrak{X}})N M_\eta^* \right)
    =
    O\!\left((n+r_{\mathfrak{X}})N\eta^{-1/2}\right).
\]
For the box support, $r_{\mathfrak{X}}=2n$, and this reduces to
$
    O\!\left((n+r_{\mathfrak{X}})N\eta^{-1/2}\right)
    = O(nN\eta^{-1/2}).
$
By contrast, under the long-only conditions
of Proposition~\ref{prop: exact sample vertex reformulation}, the exact
sample-vertex formulation contains
$
\sum_{j=1}^{N}|\mathcal V_j|
\leq N2^n
$ robust-value constraints. 
A direct implementation that enumerates these sample-specific vertices therefore has exponential worst-case size in $n$, whereas the certified hyperplane-dual formulation has polynomial size and avoids explicit sample-vertex enumeration.
\end{remark}

\begin{remark}[Computational Class of the Ground Norm]
\label{remark: ground norm extension}
    Theorem~\ref{theorem: DRO via hyperplane} indicates that the computational class depends on $p$. For $p=1$, $q=\infty$, and the dual-norm constraint is equivalent to finitely many linear inequalities; thus, when $\mathcal{W}$ is polyhedral, the formulation is a robust linear program. 
    The same is true for $p=\infty$, since $q=1$ and the constraint
    $
        \|-\alpha_m w - H^\top\mu_j^m\|_1 \leq \lambda
    $
    admits an exact linear epigraph representation. More generally, whenever the dual-norm unit ball is polyhedral, the finite reformulation remains a linear program. For $p=2$, the formulation is a second-order cone program, while for general $1<p<\infty$ it is a finite convex norm-constrained program.
\end{remark}

\begin{proposition}[Certified Robust Value and Near-Optimality]
\label{prop: robust value approximation}
    Fix $\eta>0$ and let $\widehat{U}_{M_\eta^*}$ be the supporting-hyperplane surrogate with uniform error at most~$\eta$, defined in~\eqref{eq: hyperplane surrogate}, satisfying
    \[
        0\leq \widehat{U}_{M_\eta^*}(y)-U(y) \leq \eta,
        \qquad
        \forall y\in[\underline{y},\overline{y}].
    \]
    Define the original and surrogate Wasserstein robust values by
    \[
    V^\star(\varepsilon)
    :=
    \sup_{w\in\mathcal{W}} 
    \inf_{\mathbb{F}\in\mathcal{B}_\varepsilon(\widehat{\mathbb{F}})}
    \mathbb E^{\mathbb{F}} \left[ U(\langle w,X\rangle)\right],
    \]
    and
    \[
    \widehat V_\eta^\star(\varepsilon)
    :=
    \sup_{w\in\mathcal{W}}
    \inf_{\mathbb{F}\in\mathcal{B}_\varepsilon(\widehat{\mathbb{F}})}
    \mathbb E^{\mathbb{F}} \left[ \widehat{U}_{M_\eta^*}(\langle w,X\rangle) \right].
    \]
    Then
    $
    V^\star(\varepsilon)
    \leq \widehat V_\eta^\star(\varepsilon)
    \leq V^\star(\varepsilon)+\eta.
    $
    Moreover, if $\widehat{w}_\eta$ is an optimizer of the surrogate problem, then
    \[
    \inf_{\mathbb{F}\in\mathcal{B}_\varepsilon(\widehat{\mathbb{F}})}
    \mathbb E^{\mathbb{F}}
    \left[ U(\langle \widehat w_\eta, X\rangle) \right]
    \geq
        V^\star(\varepsilon)-\eta.
    \]
\end{proposition}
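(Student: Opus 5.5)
The plan is a pointwise sandwich argument that passes the uniform bound $0\le \widehat U_{M_\eta^*}-U\le\eta$ through the expectation, then the infimum over $\mathcal B_\varepsilon(\widehat{\mathbb F})$, then the supremum over $\mathcal W$. No duality is needed; in particular Theorem~\ref{theorem: DRO via hyperplane} is not used here. Only monotonicity of $\inf$ and $\sup$ is required.

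\textbf{Step 1 (pointwise bound).} Fix $w\in\mathcal W$ and a distribution $\mathbb F\in\mathcal B_\varepsilon(\widehat{\mathbb F})$. Every such $\mathbb F$ lies in $\mathcal M(\mathfrak X)$, so $X\in\mathfrak X$ almost surely. By the definition of $\underline y,\overline y$, this gives $\langle w,X\rangle\in[\underline y,\overline y]$ almost surely, which is the key observation that makes the uniform bound applicable. Both $U$ and $\widehat U_{M_\eta^*}$ are continuous on this compact interval, hence bounded, so all expectations are finite. Taking expectations gives
\[
\mathbb E^{\mathbb F}[U(\langle w,X\rangle)]\le \mathbb E^{\mathbb F}[\widehat U_{M_\eta^*}(\langle w,X\rangle)]\le \mathbb E^{\mathbb F}[U(\langle w,X\rangle)]+\eta .
\]

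\textbf{Step 2 (worst-case values).} Define $J(w):=\inf_{\mathbb F\in\mathcal B_\varepsilon(\widehat{\mathbb F})}\mathbb E^{\mathbb F}[U(\langle w,X\rangle)]$, and define $\widehat J(w)$ analogously with $\widehat U_{M_\eta^*}$. Taking the infimum over $\mathbb F$ in Step 1 preserves both inequalities, since the constant $\eta$ passes through the infimum. This gives $J(w)\le\widehat J(w)\le J(w)+\eta$ for every $w\in\mathcal W$. Taking the supremum over $w$ then yields $V^\star(\varepsilon)\le\widehat V^\star_\eta(\varepsilon)\le V^\star(\varepsilon)+\eta$. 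The ambiguity set is nonempty because $\widehat{\mathbb F}$ belongs to it; this requires $\widehat x_j\in\mathfrak X$, which holds since the samples come from $\mathbb F$ supported on $\mathfrak X$. Hence all quantities are finite.

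\textbf{Step 3 (near-optimality).} Let $\widehat w_\eta$ attain $\widehat V^\star_\eta(\varepsilon)$. Applying the right-hand inequality of Step 2 at $w=\widehat w_\eta$ and then the left-hand inequality of the value sandwich gives
\[
J(\widehat w_\eta)\ge \widehat J(\widehat w_\eta)-\eta=\widehat V^\star_\eta(\varepsilon)-\eta\ge V^\star(\varepsilon)-\eta,
\]
which is the claimed certificate.

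The argument is routine. The only point needing care, and the main obstacle, is justifying that $\langle w,X\rangle$ remains in $[\underline y,\overline y]$ for every distribution in the ball, since the error bound only holds there. This follows because the ambiguity set is defined inside $\mathcal M(\mathfrak X)$; if mass could leave $\mathfrak X$, the upper bound $+\eta$ would fail. A secondary point is attainment of $\widehat w_\eta$. The statement assumes an optimizer exists, and one could justify this by noting that $\widehat J$ is concave and upper semicontinuous on the compact set $\mathcal W$, as an infimum of affine-in-$w$ continuous functionals. Even without attainment, an $\delta$-optimal surrogate solution would give the bound $V^\star-\eta-\delta$.
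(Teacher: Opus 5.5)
Your proposal is correct and follows essentially the same route as the paper: the pointwise sandwich on $[\underline y,\overline y]$ is passed through the expectation, then the infimum over $\mathcal B_\varepsilon(\widehat{\mathbb F})$, then the supremum over $\mathcal W$, and the near-optimality bound comes from evaluating at $\widehat w_\eta$. One small slip, which occurs only in your side remark on attainment: the map $w\mapsto \mathbb E^{\mathbb F}[\widehat U_{M_\eta^*}(\langle w,X\rangle)]$ is concave and continuous rather than affine, but the concavity and upper semicontinuity of $\widehat J$ still follow as you claim.
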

\begin{proof}
    See Appendix~\ref{appendix: technical proofs}.
\end{proof}

\begin{corollary}[Out-of-Sample Reliability Certificate]
\label{corollary: out-of-sample reliability}
    Let $\widehat{\mathbb F}_N$ be the empirical distribution of $N$
    i.i.d.\ observations from $\mathbb F$, and let $\mathbb{P}^N$ denote the corresponding $N$-fold sampling law. 
    Let $\widehat w_{\eta,N}(\varepsilon)$ and $\widehat V_{\eta,N}^{\star}(\varepsilon)$ denote an optimizer and the optimal value, respectively, of the surrogate problem based on~$\widehat{\mathbb F}_N$. Define
    $
        g(w):=\mathbb E^{\mathbb F}[U(\langle w,X\rangle)].
    $
    On the event
    $
        d_p(\mathbb F,\widehat{\mathbb F}_N)\leq\varepsilon,
    $
    we have
    \[
        g\bigl(\widehat w_{\eta,N}(\varepsilon)\bigr)
        \geq
        \widehat V_{\eta,N}^{\star}(\varepsilon)-\eta.
    \]
    Consequently,
    $
        \mathbb{P}^N\!\left\{
        g\bigl(\widehat w_{\eta,N}(\varepsilon)\bigr)
        \geq
        \widehat V_{\eta,N}^{\star}(\varepsilon)-\eta
        \right\}
        \geq
        \mathbb{P}^N\!\left\{
        d_p(\mathbb F,\widehat{\mathbb F}_N)\leq\varepsilon
        \right\}.
    $
\end{corollary}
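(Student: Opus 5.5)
The plan is to chain the pointwise surrogate bound with the definition of the Wasserstein ball, and then apply Proposition~\ref{prop: robust value approximation}. On the event $d_p(\mathbb F,\widehat{\mathbb F}_N)\leq\varepsilon$, the true distribution $\mathbb F$ lies in $\mathcal B_\varepsilon(\widehat{\mathbb F}_N)$. Here we use that $\mathbb F$ is supported on $\mathfrak X$, so that $\mathbb F\in\mathcal M(\mathfrak X)$. Hence for any fixed $w\in\mathcal W$, and in particular for $w=\widehat w_{\eta,N}(\varepsilon)$,
\[
g(w)=\mathbb E^{\mathbb F}[U(\langle w,X\rangle)]\geq \inf_{\mathbb G\in\mathcal B_\varepsilon(\widehat{\mathbb F}_N)}\mathbb E^{\mathbb G}[U(\langle w,X\rangle)].
\]

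Next, I lower bound the worst-case expected true utility at the surrogate optimizer by the surrogate optimal value minus $\eta$. The uniform bound $\widehat U_{M_\eta^*}(y)-U(y)\leq\eta$ holds on $[\underline y,\overline y]$, which contains $\langle w,x\rangle$ for all $w\in\mathcal W$ and $x\in\mathfrak X$. Therefore, for every $\mathbb G\in\mathcal B_\varepsilon(\widehat{\mathbb F}_N)$,
\[
\mathbb E^{\mathbb G}[U(\langle w,X\rangle)]\geq\mathbb E^{\mathbb G}[\widehat U_{M_\eta^*}(\langle w,X\rangle)]-\eta.
\]
Both integrands are continuous on the compact set $\mathfrak X$, so both expectations are finite. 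Taking the infimum over $\mathbb G$ and evaluating at $w=\widehat w_{\eta,N}(\varepsilon)$, which attains the surrogate supremum by definition of an optimizer, gives
\[
\inf_{\mathbb G}\mathbb E^{\mathbb G}[U(\langle \widehat w_{\eta,N},X\rangle)]\geq \widehat V^\star_{\eta,N}(\varepsilon)-\eta.
\]
Combining this with the first step proves the deterministic inequality on the event.

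An alternative for the second step is to invoke Proposition~\ref{prop: robust value approximation} with $\widehat{\mathbb F}=\widehat{\mathbb F}_N$. Its second claim gives $\geq V^\star-\eta$, and $V^\star\geq\widehat V^\star-\eta$, which would lose a factor $2\eta$. The direct route above avoids that loss, so I will use it and not the proposition's conclusion verbatim.

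Finally, the probability statement follows from event inclusion: $\{d_p(\mathbb F,\widehat{\mathbb F}_N)\leq\varepsilon\}\subseteq\{g(\widehat w_{\eta,N})\geq\widehat V^\star_{\eta,N}-\eta\}$, and monotonicity of $\mathbb P^N$ then gives the bound.

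The main subtlety I expect is measurability. I need $\widehat w_{\eta,N}(\varepsilon)$ to be a measurable selection, so that the event on the left is measurable. I would either assume a measurable optimizer selection or interpret $\mathbb P^N$ as an inner probability. Existence of a surrogate optimizer is already presupposed in the statement.
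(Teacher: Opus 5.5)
Your proposal is correct and follows the paper's proof. On the event, $\mathbb F\in\mathcal B_\varepsilon(\widehat{\mathbb F}_N)$, so $g(\widehat w_{\eta,N}(\varepsilon))$ dominates the worst-case expected utility, which is at least $\widehat V^\star_{\eta,N}(\varepsilon)-\eta$; the probability bound then follows by event inclusion. The paper cites Proposition~\ref{prop: robust value approximation} for the middle inequality, but as you noticed, its stated conclusion only gives the weaker bound $V^\star(\varepsilon)-\eta$, so what is actually needed is the pointwise step $\phi(\widehat w_\eta)\geq\widehat\phi_\eta(\widehat w_\eta)-\eta$ from that proposition's proof, which is exactly the direct argument you give.
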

\begin{proof}
    See Appendix~\ref{appendix: technical proofs}.
\end{proof}

Section~\ref{sec: Multiple-Period Case} evaluates the proposed
formulation in a rolling setting, where the empirical distribution is
updated and the finite program in Theorem~\ref{theorem: DRO via hyperplane} is resolved at each
rebalancing date.

\begin{remark}[Relation to Existing Tractable Reformulations]
    \cite{mohajerin2018data} derive finite
    reformulations for several structured loss classes, including
    piecewise-affine losses over polyhedral supports. Building on the
    supporting-hyperplane construction in~\cite{hsieh2024solving}, we
    approximate a general concave increasing utility and derive the
    corresponding finite hyperplane--dual formulation. Beyond tractability, we show that the uniform utility-approximation error directly bounds the robust-value error and the near-optimality gap for the original DRO~problem.
\end{remark}

\subsection{Large-Ambiguity Optimality Characterization}
    This subsection characterizes optimal portfolios once the Wasserstein radius is sufficiently large. Throughout this subsection, we specialize the return support to the box
    \begin{align} \label{eq: compact box}
        \mathfrak{X}_{\rm box} =\{x\in\mathbb R^n:x_{\min} \leq x \leq x_{\max}\},
    \end{align}
    and take the feasible set to be the unit simplex,
        \[
        \mathcal{W}:=\Delta_n
        =
        \{w\in\mathbb R^n:w^\top\mathbf 1=1,\; w\geq \mathbf 0\}.
        \]
    For this support, define the support diameter
    $
        \bar{\varepsilon}:=\sup_{x,x'\in\mathfrak{X}}\|x-x'\|.
    $
    These specializations are used only in the large-ambiguity characterization, where they yield a closed-form optimizer set. We then show that the same finite-threshold characterization applies to the hyperplane-approximation formulation in Theorem~\ref{theorem: DRO via hyperplane}.

\begin{theorem}[Large-Ambiguity Optimality]
\label{theorem: large ambiguity optimality}
    Given a compact support set $\mathfrak{X}= \{x \in \mathbb{R}^n : x_{\min} \leq x \leq x_{\max}\}$
    and feasible set $\mathcal{W}=\Delta_n$, define 
	\[
	c^*:=\max_{1\leq i\leq n} x_{\min,i},
	\quad
	\mathcal I^*:=\{i\in\{1,\dots,n\}:x_{\min,i}=c^*\}.
	\]
	Then, for every $\varepsilon\geq\bar\varepsilon$, the DRO problem~\eqref{eq: DRO problem} has the same optimal solution set as
    $
    \max_{w\in\Delta_n} \langle w,x_{\min}\rangle,
    $
    namely
    $
    W^* 
    = \operatorname{conv}\{e_i \in \mathbb{R}^n : i\in\mathcal I^*\}.
    $
    In particular, if $\mathcal I^*=\{i^*\}$, then $W^*=\{e_{i^*}\}$.
    If $x_{\min}=c\mathbf 1$ for some $c\in\mathbb R$, then
    $\mathcal I^*=\{1,\dots,n\}$ and hence $W^*=\Delta_n$.
\end{theorem}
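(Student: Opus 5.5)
Since the radius is at least the support diameter, the ambiguity set should be all of $\mathcal M(\mathfrak X)$. The DRO objective then collapses to a worst-case (pointwise) objective, and a monotonicity argument identifies the worst point. I would carry this out in three steps.

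\textbf{Step 1: the ball is the whole space.} For any $\mathbb F\in\mathcal M(\mathfrak X)$, take the coupling $\Pi=\mathbb F\otimes\widehat{\mathbb F}$. Both marginals live on $\mathfrak X$, and $\widehat x_j\in\mathfrak X$ as in Proposition~\ref{prop: exact sample vertex reformulation}. Hence
\[
d_p(\mathbb F,\widehat{\mathbb F})\le \mathbb E^{\Pi}\|X_1-X_2\|\le\bar\varepsilon\le\varepsilon .
\]
It follows that $\mathcal B_\varepsilon(\widehat{\mathbb F})=\mathcal M(\mathfrak X)$ for all $\varepsilon\ge\bar\varepsilon$.

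\textbf{Step 2: reduce the inner infimum to a pointwise minimum.} Since $\mathcal M(\mathfrak X)$ contains every Dirac mass $\delta_x$ with $x\in\mathfrak X$, the inner problem satisfies
\[
\inf_{\mathbb F\in\mathcal M(\mathfrak X)}\mathbb E^{\mathbb F}[U(\langle w,X\rangle)]=\min_{x\in\mathfrak X}U(\langle w,x\rangle).
\]
The inequality "$\le$" comes from the Dirac masses, and "$\ge$" holds because the expectation of a function is at least its minimum over the support. The minimum is attained because $\mathfrak X$ is compact and $U$ is continuous.

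Now use $w\ge\mathbf 0$ for $w\in\Delta_n$. This gives $\langle w,x\rangle\ge\langle w,x_{\min}\rangle$ for every $x\in\mathfrak X$, and $x_{\min}\in\mathfrak X$. Since $U$ is increasing, the minimum equals $U(\langle w,x_{\min}\rangle)$. The DRO problem~\eqref{eq: DRO problem} therefore becomes
\[
\max_{w\in\Delta_n}U(\langle w,x_{\min}\rangle).
\]
Because $U$ is strictly increasing, this problem has exactly the same maximizer set as $\max_{w\in\Delta_n}\langle w,x_{\min}\rangle$. The supremum is attained because $\Delta_n$ is compact and the objective is continuous.

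\textbf{Step 3: identify the maximizer set of the linear program.} For $w\in\Delta_n$,
\[
\langle w,x_{\min}\rangle=\sum_i w_i x_{\min,i}\le c^*,
\]
so every $e_i$ with $i\in\mathcal I^*$ attains the bound. Equality forces $\sum_i w_i(c^*-x_{\min,i})=0$. Every term in this sum is nonnegative, so $w_i=0$ whenever $i\notin\mathcal I^*$. Hence
\[
W^*=\{w\in\Delta_n:\operatorname{supp}w\subseteq\mathcal I^*\}=\operatorname{conv}\{e_i:i\in\mathcal I^*\}.
\]

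\textbf{Special cases.} If $\mathcal I^*=\{i^*\}$, then $W^*=\{e_{i^*}\}$. If $x_{\min}=c\mathbf 1$, then every index attains $c^*=c$, so $\mathcal I^*=\{1,\dots,n\}$ and $W^*=\Delta_n$.

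\textbf{Main obstacle.} The only delicate point is Step 1. It needs the samples $\widehat x_j$ to lie in $\mathfrak X$, and it needs the coupling bound to hold with the chosen $\ell_p$ norm. The diameter $\bar\varepsilon$ is defined with that same norm, so the bound goes through.
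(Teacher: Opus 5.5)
Your proposal is correct and follows essentially the same argument as the paper. An arbitrary (e.g.\ product) coupling shows $\mathcal B_\varepsilon(\widehat{\mathbb F})=\mathcal M(\mathfrak X)$ for $\varepsilon\ge\bar\varepsilon$; Dirac masses with $w\ge\mathbf 0$ reduce the inner problem to $U(\langle w,x_{\min}\rangle)$; strict monotonicity of $U$ transfers the argmax to $\max_{w\in\Delta_n}\langle w,x_{\min}\rangle$; and the identity $\langle w,x_{\min}\rangle=c^*-\sum_i w_i(c^*-x_{\min,i})$ pins down $W^*$.
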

\begin{proof}
    See Appendix~\ref{appendix: technical proofs}.
\end{proof}

\begin{remark}[Relation to the Asymptotic Large-Ambiguity Limit]
    The finite-threshold statement in Theorem~\ref{theorem: large ambiguity optimality}
    is stronger than the usual asymptotic characterization. Since
    $\mathcal{B}_\varepsilon(\widehat{\mathbb{F}})=\mathcal M(\mathfrak{X})$ for every
    $\varepsilon\geq\bar\varepsilon$, the same optimal solution set is obtained in
    the limit $\varepsilon\to\infty$.
\end{remark}

\begin{corollary}[Large-Ambiguity Optimality of the Hyperplane Approximation]
    \label{corollary: Large-Ambiguity Optimality of the Hyperplane Approximation}
    Consider the supporting-hyperplane DRO surrogate problem~\ref{problem: supporting hyperplane DRO surrogate}. Suppose that $\mathcal{W}=\Delta_n$ and that $\alpha_m>0$ for all
    $m=1,\dots, M_\eta^*$. Define
    \[
    c^* := \max_{1\leq i\leq n} x_{\min,i},
    \quad
    \mathcal{I}^*:=\{i\in\{1,\dots,n\}:x_{\min,i}=c^*\}.
    \]
    Then, for every $\varepsilon \geq \bar{\varepsilon}$, the hyperplane-approximation formulation has the same optimal solution set as the original DRO problem, namely
    \[
    W^*
        = \{w\in\Delta_n:w_i=0\ \text{for all }i\notin\mathcal I^*\}
        = \operatorname{conv}\{e_i:i\in\mathcal I^*\}.
    \]
    In particular, if $\mathcal I^*=\{i^*\}$, then $W^*=\{e_{i^*}\}$.
    If $x_{\min}=c\mathbf 1$ for some $c\in\mathbb R$, then $W^*=\Delta_n$.
\end{corollary}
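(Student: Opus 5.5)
The plan is to rerun the argument behind Theorem~\ref{theorem: large ambiguity optimality} with $U$ replaced by the surrogate $\widehat U:=\widehat U_{M_\eta^*}$. That argument uses only two properties of the utility: monotonicity and the fact that the ambiguity set is the whole of $\mathcal M(\mathfrak X)$.

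\textbf{Step 1: the ambiguity set saturates.} For $\varepsilon\geq\bar\varepsilon$, every $\mathbb F\in\mathcal M(\mathfrak X)$ satisfies $d_p(\mathbb F,\widehat{\mathbb F})\leq\bar\varepsilon\leq\varepsilon$. To see this, note that any coupling of two distributions supported on $\mathfrak X$ transports mass at cost at most $\sup_{x,x'\in\mathfrak X}\|x-x'\|=\bar\varepsilon$. Hence $\mathcal B_\varepsilon(\widehat{\mathbb F})=\mathcal M(\mathfrak X)$. Since Dirac measures $\delta_x$ with $x\in\mathfrak X$ belong to this set, and every expectation is at least the pointwise infimum, we get
\[
\inf_{\mathbb F\in\mathcal B_\varepsilon(\widehat{\mathbb F})}\mathbb E^{\mathbb F}\bigl[\widehat U(\langle w,X\rangle)\bigr]=\min_{x\in\mathfrak X}\widehat U(\langle w,x\rangle).
\]

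\textbf{Step 2: the surrogate is strictly increasing.} The function $\widehat U$ is a minimum of finitely many affine functions, all with slopes $\alpha_m>0$. It is therefore continuous and strictly increasing on $\mathbb R$: if $y<y'$, the minimizing index $m$ at $y'$ gives
\[
\widehat U(y)\leq\alpha_m y+\beta_m<\alpha_m y'+\beta_m=\widehat U(y').
\]
Since $w\in\Delta_n$ is nonnegative, $\min_{x\in\mathfrak X}\langle w,x\rangle=\langle w,x_{\min}\rangle$, attained at $x=x_{\min}$. Monotonicity then gives the worst-case value $\widehat U(\langle w,x_{\min}\rangle)$. Note that $\langle w,x_{\min}\rangle\in[\underline y,\overline y]$, although this does not matter here.

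\textbf{Step 3: identify the maximizers.} Strict monotonicity of $\widehat U$ implies that maximizing $\widehat U(\langle w,x_{\min}\rangle)$ over $\Delta_n$ has exactly the same maximizer set as the linear program $\max_{w\in\Delta_n}\langle w,x_{\min}\rangle$. The value of this LP is $c^*$. Because
\[
\langle w,x_{\min}\rangle=c^*-\sum_{i\notin\mathcal I^*}w_i\,(c^*-x_{\min,i}),
\]
and each coefficient $c^*-x_{\min,i}$ is positive for $i\notin\mathcal I^*$, a point $w\in\Delta_n$ is a maximizer if and only if $w_i=0$ for all $i\notin\mathcal I^*$. This set equals $\operatorname{conv}\{e_i:i\in\mathcal I^*\}$. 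Theorem~\ref{theorem: large ambiguity optimality} identifies the same set for the original problem, which gives the claimed equality of optimal sets. The special cases follow directly: if $\mathcal I^*=\{i^*\}$ the set is the singleton $\{e_{i^*}\}$, and if $x_{\min}=c\mathbf 1$ then $\mathcal I^*=\{1,\dots,n\}$ and the set is all of $\Delta_n$.

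\textbf{Main obstacle.} The delicate point is the phrase ``hyperplane-approximation formulation.'' If it refers to the finite program~\eqref{eq: DRO hyperplane lemma} rather than to Problem~\ref{problem: supporting hyperplane DRO surrogate}, one must check that the optimal $w$-sets coincide. This requires that, for each fixed $w$, the inner supremum over $(\lambda,a,\mu)$ in~\eqref{eq: DRO hyperplane lemma} equals the surrogate worst-case value. That is exactly the pointwise-in-$w$ duality established in the proof of Theorem~\ref{theorem: DRO via hyperplane}, which I would invoke for each fixed $w$. The $w$-projection of the optimal set then matches $W^*$.
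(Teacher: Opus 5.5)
Your proposal is correct and follows essentially the same route as the paper: the ambiguity set saturates for $\varepsilon\geq\bar\varepsilon$, $\widehat U_{M_\eta^*}$ is strictly increasing because $\alpha_m>0$ (you use the active index at $y'$ where the paper uses $\alpha_{\min}$, which makes no difference), the problem reduces to $\max_{w\in\Delta_n}\langle w,x_{\min}\rangle$, and the characterization of Theorem~\ref{theorem: large ambiguity optimality} then applies. Your closing remark about the finite program~\eqref{eq: DRO hyperplane lemma} spells out, via the pointwise-in-$w$ duality behind Theorem~\ref{theorem: DRO via hyperplane}, a step the paper only asserts ``by construction.''
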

\begin{proof}
    See Appendix~\ref{appendix: technical proofs}.
\end{proof}

\section{Empirical Studies}
\label{Section: Empirical Studies}
  
    We evaluate three formulations. The proposed hyperplane--dual formulation in Theorem~\ref{theorem: DRO via hyperplane} is denoted by $\texttt{DRO}_{\texttt{HYP}}$. The exact sample-specific vertex reformulation in Proposition~\ref{prop: exact sample vertex reformulation} is denoted by $\texttt{DRO}_{\texttt{EX}}$. An \texttt{RSOME} implementation of the same hyperplane-approximated DRO problem is denoted by $\texttt{RSOME}_{\texttt{HYP}}$. We use $\texttt{DRO}_{\texttt{EX}}$ to assess approximation fidelity to the original DRO problem and $\texttt{RSOME}_{\texttt{HYP}}$ to assess implementation consistency. The $\texttt{DRO}_{\texttt{HYP}}$ and $\texttt{DRO}_{\texttt{EX}}$ formulations are implemented in \texttt{CVXPY}~\cite{diamond2016cvxpy}, whereas $\texttt{RSOME}_{\texttt{HYP}}$ is implemented in \texttt{RSOME}~\cite{chen2020robust}; all three formulations are solved with MOSEK. All experiments in this section are conducted on a laptop equipped with a 3.2~GHz processor and 16~GB of~RAM.

\paragraph{Portfolio Setting}
    We consider a portfolio with $n \geq 1$ assets whose returns are represented by the random vector $X \in \mathbb{R}^n$, supported on the compact box~\eqref{eq: compact box}, i.e.,
    $
    \mathfrak{X} = \mathfrak{X}_{\rm box}.
    $
    All empirical DRO models use the $\ell_1$ ground norm. In the
    market-data experiments, the box bounds are set separately for each estimation window to the sample-wide minimum and maximum observed returns and are applied uniformly across~assets.   
    
    We assume $x_{\min} > -\mathbf{1}$ to ensure strictly positive prices.
    The true distribution $\mathbb{F}$ of returns $X$ is unknown; instead, the investor observes $N$ return samples $\{\widehat{X}_j\}_{j=1}^N$.
    The feasible set for the portfolio-weight vector is defined as~$
    \mathcal{W} 
    := 
    \left\{ 
        w \in \mathbb{R}^n : {w}^\top \mathbf{1} = 1, \; {w} \geq \mathbf{0} 
    \right\}
    $, representing a \emph{long-only} and \emph{cash-financed} strategy.

\paragraph{Logarithmic Growth Utility}
    We adopt the \emph{logarithmic growth rate} of wealth as the utility function, i.e.,  
    $
        U( \langle w, x \rangle ) :=  \log(1 + \langle w, x\rangle), 
    $
    which is a common choice in sequential investment problems; see, e.g., \cite{Cover_Thomas_2012,Luenberger_2013_investment_sci,Rujeerapaiboon2014robust, hsieh2024solving, hsieh2026flight}.
    The investor's goal is to choose a portfolio weight $w \in \mathcal{W}$ so as to maximize the expected logarithmic growth rate of wealth under the worst-case return distribution within the ambiguity set:
    $$
        \max_{w \in \mathcal{W}} \, 
        \inf_{\mathbb{F} \in \mathcal{B}_{\varepsilon}(\widehat{\mathbb{F}})} 
        \mathbb{E}^{\mathbb{F}}  \left[  \log(1 + \langle w, X\rangle) \right].
    $$
    The empirical studies proceed in four stages.
First, Section~\ref{sec: Single-Period Cases} assesses approximation
fidelity, implementation consistency, and computation time against the
exact and \texttt{RSOME} benchmarks.
Second, Section~\ref{sec: Multiple-Period Case} evaluates repeated
deployment in an S\&P~500-scale rolling portfolio.
Third, Section~\ref{sec: Computational Scalability} isolates the effect
of portfolio dimension under a controlled experimental design.
Finally, Section~\ref{sec: Impact of Wasserstein Radius} examines how
the Wasserstein radius affects out-of-sample performance and reliability.

\subsection{Single-Period Benchmarks} 
\label{sec: Single-Period Cases}
    This subsection compares $\texttt{DRO}_{\texttt{HYP}}$ with $\texttt{DRO}_{\texttt{EX}}$ and $\texttt{RSOME}_{\texttt{HYP}}$ on an 11-asset single-period portfolio. We evaluate solution fidelity through the resulting portfolio weights and computational efficiency through the average solution time over the Wasserstein-radius grid.

\paragraph{Data and Experimental Setup}
    We consider an 11-asset portfolio comprising~10 risky assets and one risk-free asset. The risky assets are the 10 largest constituents of the S\&P~500 by market capitalization at the end of 2022: \texttt{AAPL}, \texttt{MSFT}, \texttt{AMZN}, \texttt{GOOG}, \texttt{GOOGL}, \texttt{UNH}, \texttt{JNJ}, \texttt{XOM}, \texttt{BRK.B}, and \texttt{JPM}.  
    Daily adjusted closing prices from January 1, 2022, to March 1, 2022, are obtained from \cite{yahoo_finance_2024}.  
    The risk-free asset is represented by the 1-Month Constant Maturity Market Yield on U.S. Treasury Securities obtained from~\cite{fred}. Because the observed yield varies over time, it is included as an additional return series in the DRO~model.

    We evaluate the models over various ambiguity radii $\varepsilon \in [ 10^{-4}, \ 10^{0}]$ and set the supporting-hyperplane approximation tolerance to~$\eta = 10^{-3}$. The radius grid is used to expose the robustness--performance trade-off.

\paragraph{Solution Fidelity and Implementation Consistency}
    Figure~\ref{figure: Optimal Weights for the 10 Risky and 1 Risk-Free Asset Portfolio.} compares the portfolio weights produced by
    $\texttt{DRO}_{\texttt{HYP}}$,
    $\texttt{DRO}_{\texttt{EX}}$, and
    $\texttt{RSOME}_{\texttt{HYP}}$ across the Wasserstein-radius grid. 
    The left, middle, and right panels correspond to the three methods, respectively.
    The maximum observed robust-value gap between
$\texttt{DRO}_{\texttt{HYP}}$ and
$\texttt{DRO}_{\texttt{EX}}$ is $1.38\times10^{-4}$, below the prescribed approximation tolerance
$\eta=10^{-3}$. The maximum absolute robust-value discrepancy between $\texttt{DRO}_{\texttt{HYP}}$ and
$\texttt{RSOME}_{\texttt{HYP}}$ is~$1.96\times10^{-12}$. Thus, the exact benchmark confirms the certified
approximation fidelity, while the \texttt{RSOME} comparison confirms
implementation consistency.

\begin{figure}[htbp]
    \centering
    \includegraphics[width=.8\linewidth]{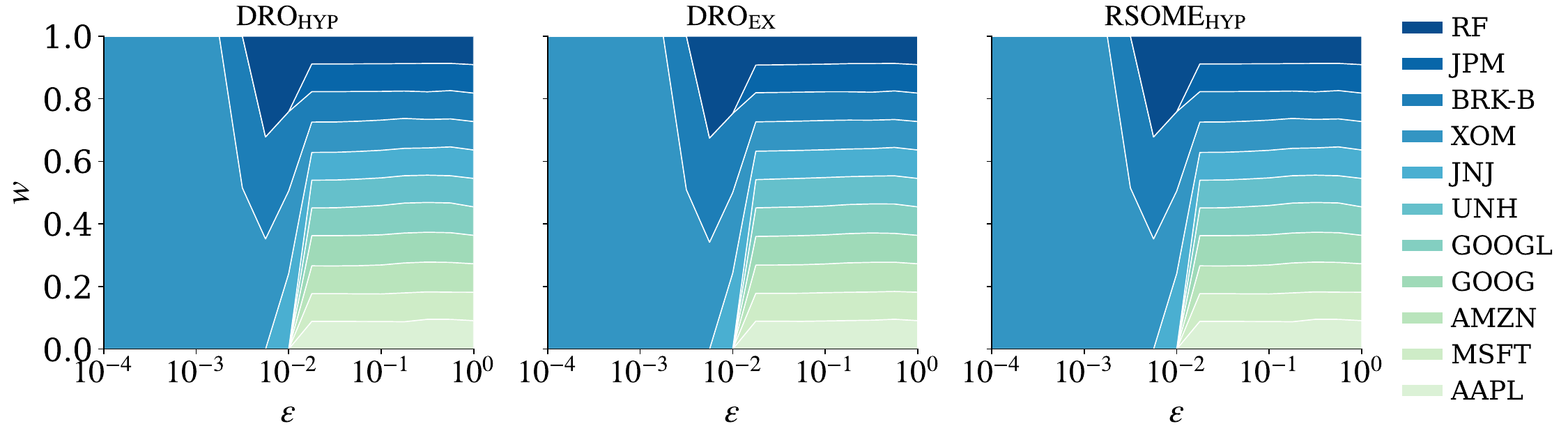}
    \caption{Optimal Portfolio Weights for 10 Risky Assets and 1 Risk-Free Asset.}
    \label{figure: Optimal Weights for the 10 Risky and 1 Risk-Free Asset Portfolio.}
\end{figure}

\paragraph{Computation Time}
    Table~\ref{table: computation time (single period)} reports the mean end-to-end solution time over the Wasserstein-radius grid for the 11-asset single-period benchmark. For $\texttt{DRO}_{\texttt{HYP}}$, we solve the box-specialized LP in Corollary~\ref{corollary: box-specialized HYP reformulation}. The proposed $\texttt{DRO}_{\texttt{HYP}}$ formulation requires $0.009$ seconds on average, compared with~$3.702$ seconds for the exact $\texttt{DRO}_{\texttt{EX}}$ and $0.504$ seconds for $\texttt{RSOME}_{\texttt{HYP}}$. Thus, $\texttt{DRO}_{\texttt{HYP}}$ is approximately~$413$ times faster than the exact sample-specific vertex formulation and $56$ times faster than the \texttt{RSOME} implementation of the same~surrogate.

\begin{table}[htbp]
	\centering
    \scriptsize
	\caption{Solution fidelity and mean end-to-end computation time for the
	11-asset benchmark over
	$\varepsilon\in[10^{-4},10^0]$ with $\eta=10^{-3}$.
	Value gaps are measured relative to
	$\texttt{DRO}_{\texttt{HYP}}$.}
	\label{table: computation time (single period)}
	\begin{tabular}{lrr}
	\toprule
	Method & Mean time (s) & Max. abs. value gap \\
	\midrule
	$\texttt{DRO}_{\texttt{HYP}}$
		& 0.009 & -- \\
	$\texttt{DRO}_{\texttt{EX}}$
		& 3.702 & $1.38\times10^{-4}$ \\
	$\texttt{RSOME}_{\texttt{HYP}}$
		& 0.504 & $1.96\times10^{-12}$ \\
	\bottomrule
	\end{tabular}
\end{table}

\subsection{Multiple-Period Large-Scale Portfolio Rebalancing}
\label{sec: Multiple-Period Case}
    In the multiple-period experiment, we evaluate repeated deployment of $\texttt{DRO}_{\texttt{HYP}}$ for large-scale portfolio rebalancing under varying levels of ambiguity. For the long-only portfolios, box return support, and $\ell_1$ ground norm used in this experiment, we implement the general hyperplane--dual formulation through its exact box-specialized LP representation in Corollary~\ref{corollary: box-specialized HYP reformulation}.

\paragraph{Data and Experimental Setup}
    We consider a five-year out-of-sample trading period from January~2021 through December~2025. This horizon provides~60 consecutive monthly rebalancing decisions and spans heterogeneous return and volatility conditions, allowing us to evaluate repeated controller deployment beyond a single market episode. 

    To isolate computational scalability from missing-data, we construct a fixed universe of~475 risky assets with complete daily adjusted closing-price records over December~2020--December~2025; the price data are obtained from~\cite{yahoo_finance_2024}. 
    Together with the risk-free proxy used in Section~\ref{sec: Single-Period Cases}, the optimization problem has $n=476$ assets. Because the complete-data universe is selected using the full study period, this experiment evaluates repeated high-dimensional controller deployment rather than the performance of a point-in-time S\&P~500 constituent strategy.
    We use a Wasserstein-radius grid with $\varepsilon \in [10^{-4}, 10^{0}]$ and set the supporting-hyperplane approximation tolerance to~$\eta = 10^{-3}$, consistent with the single-period benchmark. As above, the radius grid is used for sensitivity analysis rather than as a universal calibration rule.

    We implement the following rolling procedure:
    $(i)$~For each calendar month from December~2020 through November~2025, $\texttt{DRO}_\texttt{HYP}$ computes one optimal portfolio-weight vector using that month's daily returns, yielding 60 monthly decisions.
    $(ii)$~Each portfolio-weight vector is applied during the immediately following calendar month; for example, the vector computed from December~2020 returns is applied in January~2021. This procedure produces 60 out-of-sample monthly holding periods ending in December~2025.
 
    We evaluate the rolling implementation using computation time, account-value trajectories, and the performance metrics defined below.

\paragraph{Computation Time}
    Across the 300 solves corresponding to 60 monthly instances and five Wasserstein radii, the mean and median solution times were $0.065$ and $0.044$ seconds, respectively; the $90$th percentile was
    $0.137$ seconds, and the maximum was $0.534$ seconds.
    Thus, the $\texttt{DRO}_\texttt{HYP}$ model is readily compatible with monthly rebalancing of the S\&P~500 portfolio.

\paragraph{Performance Metrics}
Let $T_{\rm d}$ denote the number of out-of-sample trading days and $V(t)$ the account value on day~$t$. Let $\bar r$ and $\sigma_{\rm daily}$ denote the mean and standard deviation of daily portfolio returns, respectively, and let $r_f$ denote the daily risk-free rate. We report cumulative return ($\mathrm{CR}$), annualized volatility ($\sigma$), annualized Sharpe ratio ($\mathrm{SR}$), maximum drawdown ($\mathrm{MDD}$), and the Calmar ratio:
{\small \begin{align*}
    &\mathrm{CR}
    :=\frac{V(T_{\rm d})-V(0)}{V(0)},
     &&\qquad
    \sigma
    :=\sqrt{252}\,\sigma_{\rm daily},
    \\
    &\mathrm{SR}
    :=\sqrt{252}\,\frac{\bar r-r_f}{\sigma_{\rm daily}},
    &&\qquad
    \mathrm{MDD}
    :=\max_{0\leq l<k\leq T_{\rm d}}
    \frac{V(l)-V(k)}{V(l)},
    \\
    &r_{\rm annualized}
    :=\left(\frac{V(T_{\rm d})}{V(0)}\right)^{252/T_{\rm d}}-1,
    &&\qquad
    \mathrm{Calmar}
    :=\frac{r_{\rm annualized}}{\mathrm{MDD}}.
\end{align*}
}

\paragraph{Out-of-Sample Trading Performance}
    We evaluate the out-of-sample trading performance of the $\texttt{DRO}_\texttt{HYP}$ model for $\varepsilon \in \{10^{-4}, 10^{-3}, 10^{-2}, 10^{-1}, 10^{0} \}$.
    To provide benchmarks for comparison, we consider three reference portfolios constructed from the same set of S\&P 500 constituents: the sample average approximation (\texttt{SAA}) portfolio, the equal-weight buy-and-hold (\texttt{EW(BH)}) portfolio, and the SPY ETF buy-and-hold (\texttt{SPY(BH)}) portfolio.
    The rolling out-of-sample trading period spans from January~1 2021 through December~31 2025.
    Figure~\ref{figure: account_trajectory_DROHYP_SP500_2021.01~2025.12} reports the corresponding account-value trajectories for different choices of $\varepsilon$ alongside the \texttt{SAA}, \texttt{EW(BH)}, and \texttt{SPY(BH)} benchmarks.
    Table~\ref{table: Mult Performance Metrics of SP500} summarizes the performance metrics.

The Wasserstein radius materially changes the risk--return profile. At
$\varepsilon=10^{-4}$, the
$\texttt{DRO}_{\texttt{HYP}}$ portfolio closely resembles \texttt{SAA}:
their cumulative returns are $4.77$ and $5.18$, respectively, while
both exhibit annualized volatility of approximately $0.46$ and maximum
drawdown of approximately $0.38$. At~$\varepsilon=10^{-2}$, the robust
portfolio attains the highest Sharpe ratio and Calmar ratio among the
tested DRO policies, with $\mathrm{SR}=1.11$ and
$\mathrm{Calmar}=1.34$. At~$\varepsilon=1$, its cumulative return, volatility, and maximum drawdown are~$0.81$, $0.16$, and $0.20$, respectively, close to the corresponding \texttt{EW(BH)} values of~$0.83$, $0.16$, and $0.20$. 
Thus, the experiment shows that moderate ambiguity yields the highest observed risk-adjusted performance, whereas large ambiguity produces a substantially more conservative allocation.

\begin{figure}[h!]
    \centering
    \includegraphics[width=.7\linewidth]{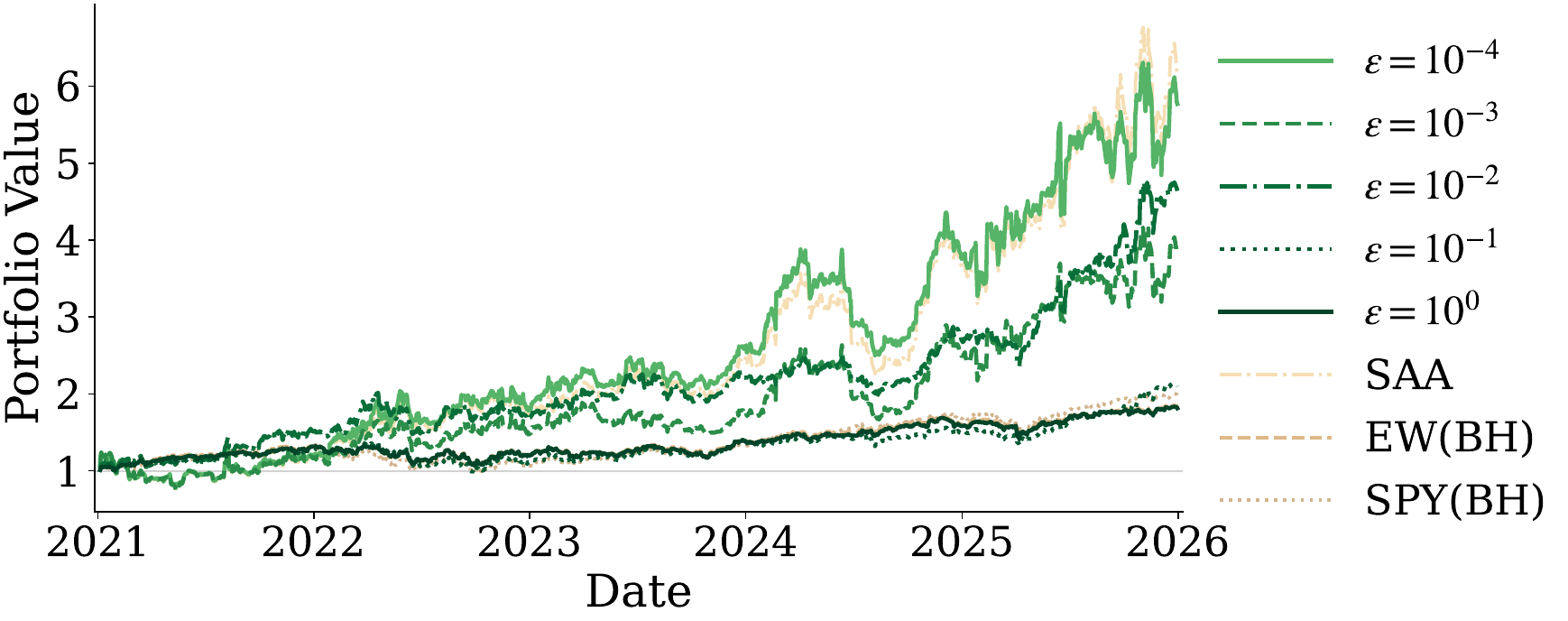}
    \caption{Out-of-Sample Account-Value Trajectories of the S\&P~500 Portfolio with Time-Varying Weights over 2021--2025.}
    \label{figure: account_trajectory_DROHYP_SP500_2021.01~2025.12}
\end{figure}

\begin{table}[h!]
    \centering
    \scriptsize
    \caption{Performance metrics for the fixed 476-asset rolling portfolio
    experiment over 2021--2025.}
    \label{table: Mult Performance Metrics of SP500}
    \begin{tabular}{lrrrrr}
    \toprule
    & CR & $\sigma$ & SR & MDD & Calmar \\
    \midrule
    $\varepsilon=10^{-4}$ & 4.77 & 0.45 & 0.93 & 0.38 & 1.12 \\
    $\varepsilon=10^{-3}$ & 2.81 & 0.43 & 0.76 & 0.38 & 0.80 \\
    $\varepsilon=10^{-2}$ & 3.64 & 0.29 & 1.11 & 0.27 & 1.34 \\
    $\varepsilon=10^{-1}$ & 1.10 & 0.19 & 0.71 & 0.28 & 0.57 \\
    $\varepsilon=10^{0}$  & 0.81 & 0.16 & 0.63 & 0.20 & 0.64 \\
    \texttt{SAA}           & 5.18 & 0.46 & 0.95 & 0.38 & 1.15 \\
    \texttt{EW(BH)}        & 0.83 & 0.16 & 0.63 & 0.20 & 0.66 \\
    \texttt{SPY(BH)}       & 0.98 & 0.17 & 0.70 & 0.25 & 0.60 \\
    \bottomrule
    \end{tabular}
\end{table}

\paragraph{Transaction-Cost Sensitivity}
    As an additional robustness check, we apply proportional transaction-cost rates of $0.1\%$, $0.2\%$, and $0.3\%$ to risky-asset turnover at each rebalancing date. These costs are assessed ex post and are not included in the portfolio-optimization objective. Increasing transaction costs reduces cumulative returns, Sharpe ratios, and Calmar ratios most noticeably for small values of~$\varepsilon$, whose allocations rebalance more aggressively, whereas the portfolio with~$\varepsilon=10^{-2}$ retains the highest Sharpe and Calmar ratios among the tested rebalanced policies at every cost level. Complete account trajectories and performance metrics are reported in Appendix~\ref{appendix: transaction cost sensitivity}.

\subsection{Computational Scalability}
\label{sec: Computational Scalability}

The preceding experiment demonstrates repeated deployment of $\texttt{DRO}_{\texttt{HYP}}$ in an S\&P~500 portfolio.
To isolate computational scaling from variation in market data and monthly sample sizes, we next consider a controlled family of synthetic instances and compare $\texttt{DRO}_{\texttt{HYP}}$ with an
implementation of the exact sample-specific vertex reformulation in Proposition~\ref{prop: exact sample vertex reformulation}. This exact benchmark, denoted by $\texttt{DRO}_{\texttt{EX-CG}}$, uses a constraint-generation cutting-plane algorithm that iteratively adds violated sample--vertex constraints to a restricted master problem. 

This procedure avoids enumerating all $N2^n$ constraints in advance.
Appendix~\ref{appendix: exact constraint generation} details its
implementation and proves that the required samplewise constraint
search is exact.

\paragraph{Data and Experimental Setup}
We fix the sample size at $N=20$, the Wasserstein radius at~$\varepsilon=10^{-2}$, the approximation tolerance at $\eta=10^{-3}$, and the box return support at
$\mathfrak{X}=[-0.15,0.15]^n$, which is described by $2n$ linear inequalities. 
The resulting logarithmic-utility approximation uses
$M:=M_\eta^*=5$ supporting hyperplanes. Hence, with $N=20$, $M=5$, and $r_{\mathfrak{X}}=2n$, the box-specialized $\texttt{DRO}_{\texttt{HYP}}$ formulation in Corollary~\ref{corollary: box-specialized HYP reformulation} contains
$
    n+1+N+nM=6n+21 
$
scalar decision variables. Excluding the constraints defining
$\mathcal W$ and $\lambda\geq0$, it contains $NM=100$
robust-value constraints, $nM=5n$ linking inequalities, and
$nM=5n$ nonnegativity constraints for the variables~$s^m$.
Thus, with $N$ and $M$ fixed, both the variable and constraint
counts grow linearly with~$n$.

Consider eight portfolio dimensions:
$
    n\in\{10,25,50,100,250,476,750,1000\}.
$
We repeat the entire data-generation procedure independently ten
times. In each replication, we first draw asset-specific parameters
$(\mu_i,\sigma_i)$ for $i=1,\dots,1000$, where
$
    \mu_i \sim \operatorname{Unif}[-0.002, 0.002],
$
and
$
    \sigma_i \sim \operatorname{Unif}[0.005, 0.03].
$
Conditional on these parameters, we generate $X_{ji}\sim\mathcal N(\mu_i,\sigma_i^2)$ for $j=1,\dots,N,$ and $i=1,\dots,1000$, and clip the resulting returns to $[-0.15,0.15]$.
Within each replication, the instance of dimension~$n$ consists of the first $n$ columns, so the instances are nested across dimensions.

Both $\texttt{DRO}_{\texttt{HYP}}$ and
$\texttt{DRO}_{\texttt{EX-CG}}$ are implemented in \texttt{CVXPY}
and solved with MOSEK. For $\texttt{DRO}_{\texttt{HYP}}$, we use the
box-specialized linear formulation in
Corollary~\ref{corollary: box-specialized HYP reformulation}.
Each $\texttt{DRO}_{\texttt{EX-CG}}$ restricted master problem retains
the original nonlinear logarithmic utility and is represented as an
exponential-cone program. For both methods, the reported end-to-end
time includes method-specific preprocessing, model construction, and
all solver calls; for $\texttt{DRO}_{\texttt{EX-CG}}$, it also includes
all constraint-generation iterations and samplewise constraint~searches.

\paragraph{Results and Discussion}

Table~\ref{table: computational scalability} shows that
$\texttt{DRO}_{\texttt{HYP}}$ was faster at every tested dimension. As~$n$ increased from 10 to 1000, its median end-to-end time increased
from~$0.0078$ to $0.1279$ seconds, compared with~$0.1253$ to~$4.6461$ seconds for $\texttt{DRO}_{\texttt{EX-CG}}$. 
At the S\&P~500-scale dimension~$n=476$, the corresponding median times were~$0.0580$ and~$1.3076$ seconds, yielding a $22.53\times$ HYP speedup; at $n=1000$, the HYP speedup was $36.32\times$. Across all instances, the largest robust-value gap was~$2.29\times10^{-4}$, well below the prescribed tolerance~$\eta=10^{-3}$, consistent with the value certificate in Proposition~\ref{prop: robust value approximation}. 
Thus, across the tested instances, $\texttt{DRO}_{\texttt{HYP}}$ retains a substantial computational advantage while satisfying its certified approximation~tolerance.

\begin{table}[htbp]
    \centering
    \scriptsize
    \caption{Controlled scaling comparison of
    $\texttt{DRO}_{\texttt{HYP}}$ and
    $\texttt{DRO}_{\texttt{EX-CG}}$ over ten replications ($N=20$, $\eta=10^{-3}$, and
    $\varepsilon=10^{-2}$).
    Times are median end-to-end seconds. HYP speedup denotes
    $\texttt{DRO}_\texttt{EX-CG}/\texttt{DRO}_\texttt{HYP}$, and the value gap is~$\widehat V_\eta^\star-V^\star$.}
    \label{table: computational scalability}
    \setlength{\tabcolsep}{3.5pt}
    \begin{tabular}{l cc c c}
        \toprule
        $n$ & $\texttt{DRO}_\texttt{HYP}$ & $\texttt{DRO}_\texttt{EX-CG}$
        & HYP speedup & Max. gap \\
        \midrule
        10   & 0.0078 & 0.1253 & $16.01\times$ & $1.36\times10^{-4}$ \\
        25   & 0.0096 & 0.1448 & $15.09\times$ & $1.22\times10^{-4}$ \\
        50   & 0.0117 & 0.1311 & $11.18\times$ & $2.29\times10^{-4}$ \\
        100  & 0.0172 & 0.1838 & $10.69\times$ & $2.08\times10^{-4}$ \\
        250  & 0.0321 & 0.6561 & $20.43\times$ & $1.32\times10^{-4}$ \\
        476  & 0.0580 & 1.3076 & $22.53\times$ & $1.13\times10^{-4}$ \\
        750  & 0.0887 & 2.4381 & $27.48\times$ & $7.11\times10^{-5}$ \\
        1000 & 0.1279 & 4.6461 & $36.32\times$ & $6.80\times10^{-5}$ \\
        \bottomrule
    \end{tabular}
\end{table}

\subsection{Impact of Wasserstein Radius}
\label{sec: Impact of Wasserstein Radius}
     This subsection investigates how the Wasserstein radius
$\varepsilon$ affects the out-of-sample performance of
$\texttt{DRO}_{\texttt{HYP}}$. For an in-sample data set
$\widehat{\mathfrak{X}}_N
:=\{\widehat{x}_1,\dots,\widehat{x}_N\}$ of $N$ i.i.d.\ observations, let
$\widehat V_{\eta,N}^\star(\varepsilon)$ and
$\widehat w_{\eta,N}(\varepsilon)$ denote the optimal value and an
optimizer, respectively, of the supporting-hyperplane surrogate with
tolerance $\eta$. Its true out-of-sample performance is
\[
    g\bigl(\widehat w_{\eta,N}(\varepsilon)\bigr)
    :=
    \mathbb E^{\mathbb F}
    \left[
    \log\!\left(
    1+\left\langle\widehat w_{\eta,N}(\varepsilon),X\right\rangle
    \right)
    \right].
\]
We define its \emph{certified reliability} as
$
\mathbb{P}^N
\left\{
\widehat{\mathfrak{X}}_N:
g\bigl(\widehat w_{\eta,N}(\varepsilon)\bigr)
\geq
\widehat V_{\eta,N}^\star(\varepsilon)-\eta
\right\},
$
where $\mathbb{P}^N$ is the $N$-fold product distribution induced by the
true return distribution $\mathbb F$.

\paragraph{Synthetic Data}
    We consider a synthetic portfolio of~$n=10$ assets. For each $i=1,\dots,n$, the daily returns of asset~$i$ are generated independently from a normal distribution with location parameter
    $
        \mu_i=-0.003+0.0008i, 
    $
    and scale parameter
    $
        \sigma_i=0.002+0.003i,
    $
    truncated to~$[-0.15,0.15]$. Thus, higher-index assets have both larger location and scale~parameters.

    We solve the $\texttt{DRO}_\texttt{HYP}$ model using training samples of sizes $N \in \{ 10, 100, 500 \}$ and Wasserstein radii $\varepsilon \in [10^{-4}, 1]$ to examine their effects on out-of-sample performance. 
    The experiment is repeated over $500$ independent simulation runs, and we report the average optimal weights, as well as out-of-sample performance and reliability evaluated using 1,000 independent test samples drawn from the generating distribution in each run.

\paragraph{Average Portfolio Weights}
    Figure~\ref{figure: average_opt_weight} shows the average portfolio weights across $500$ simulation runs. Assets are ordered by index, with higher-index (higher-mean) assets appearing at the top.
    As the sample size $N$ increases, the empirical distribution converges to the true distribution, and at the same time, portfolios computed with smaller $\varepsilon$ tend to allocate heavily to large-index assets. 

\begin{figure}[h!]
    \centering
    \includegraphics[width=.7\linewidth]{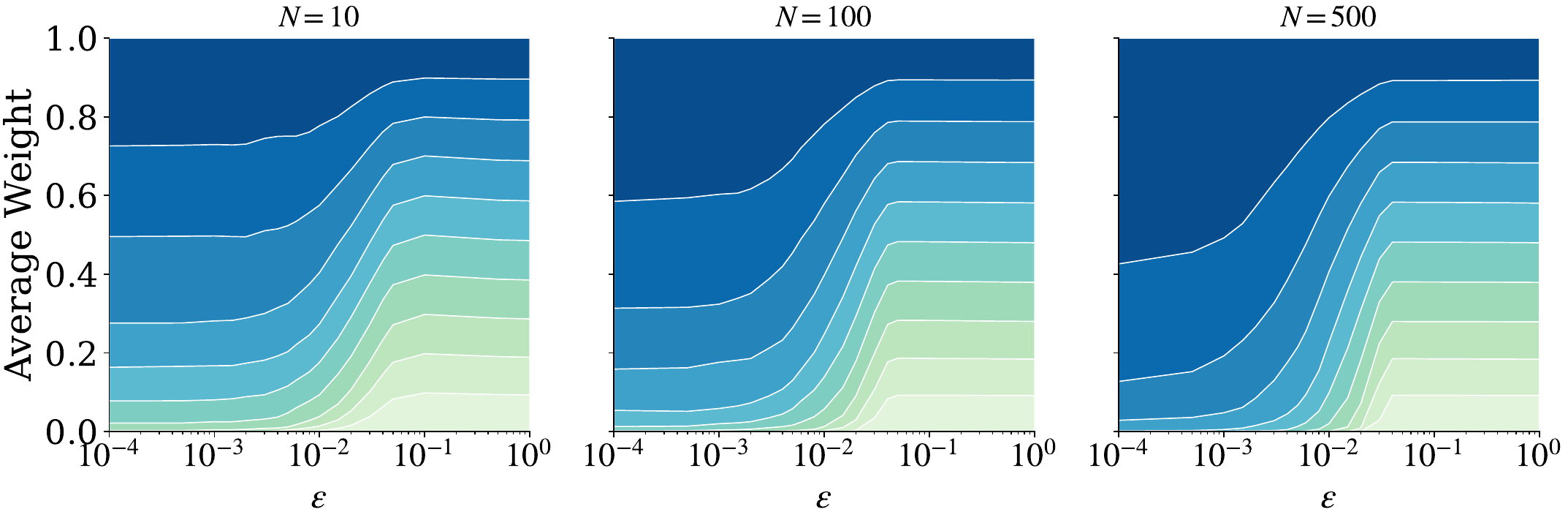}
    \caption{Average Portfolio Weights.}
    \label{figure: average_opt_weight}
\end{figure}

\paragraph{Out-of-Sample Performance}
    Out-of-sample performance was estimated by evaluating each simulation on 1,000 independent testing samples. 
    Figure~\ref{figure: out_of_sample_performance} shows the results averaged across simulation runs, where the lines represent the out-of-sample performance of the $\texttt{DRO}_\texttt{HYP}$ models, whereas the hollow circles indicate the corresponding SAA performance.

\begin{figure}[h!]
    \centering
    \includegraphics[width=0.5\linewidth]{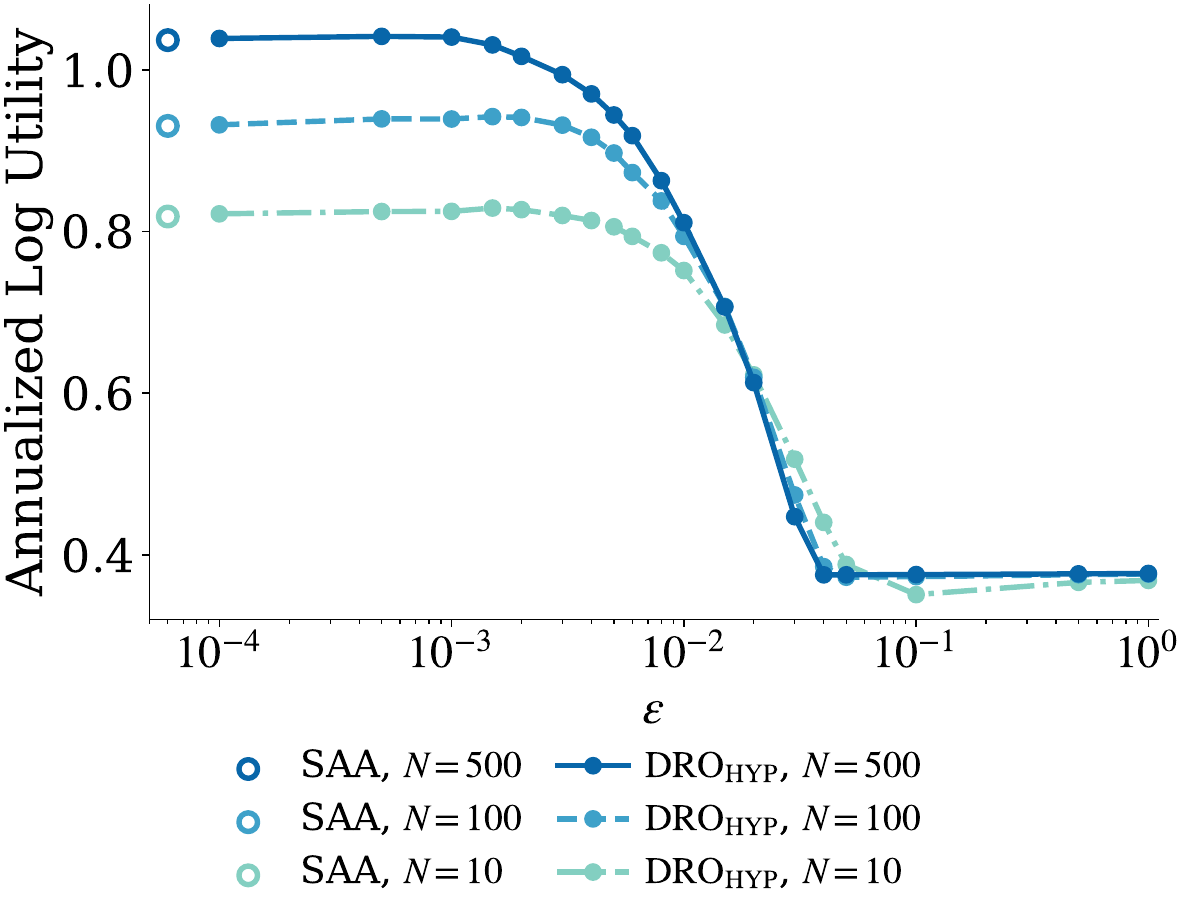}
    \caption{Out-of-Sample Performance versus $\varepsilon$.}
    \label{figure: out_of_sample_performance}
\end{figure}

The $y$-axis shows annualized expected log-growth, obtained by multiplying the estimated daily expected log-growth rate by 252. 
For every sample size, the portfolio computed at the smallest radius performs similarly to its SAA counterpart. As $\varepsilon$ increases
from its smallest value, out-of-sample performance initially improves,
reaches an interior maximum, and then declines as the resulting portfolio becomes increasingly conservative. Increasing the sample size generally improves out-of-sample performance, although this benefit becomes limited at large radii, where the portfolio is governed primarily by worst-case support values.

\paragraph{Reliability}
Figure~\ref{figure: reliability} reports the Monte Carlo estimate of
the certified reliability defined in
Corollary~\ref{corollary: out-of-sample reliability}, with~$g(\widehat w_{\eta,N}(\varepsilon))$ estimated using 1,000 independent
test observations in each simulation run. The estimated reliability
increases with both $\varepsilon$ and~$N$. This pattern is consistent
with Corollary~\ref{corollary: out-of-sample reliability}: increasing
$\varepsilon$ enlarges the event
$d_p(\mathbb F,\widehat{\mathbb F}_N)\leq\varepsilon$, while increasing
$N$ improves the concentration of $\widehat{\mathbb F}_N$ around~$\mathbb F$.

\begin{figure}[htbp]
    \centering
    \includegraphics[width=0.5\linewidth]{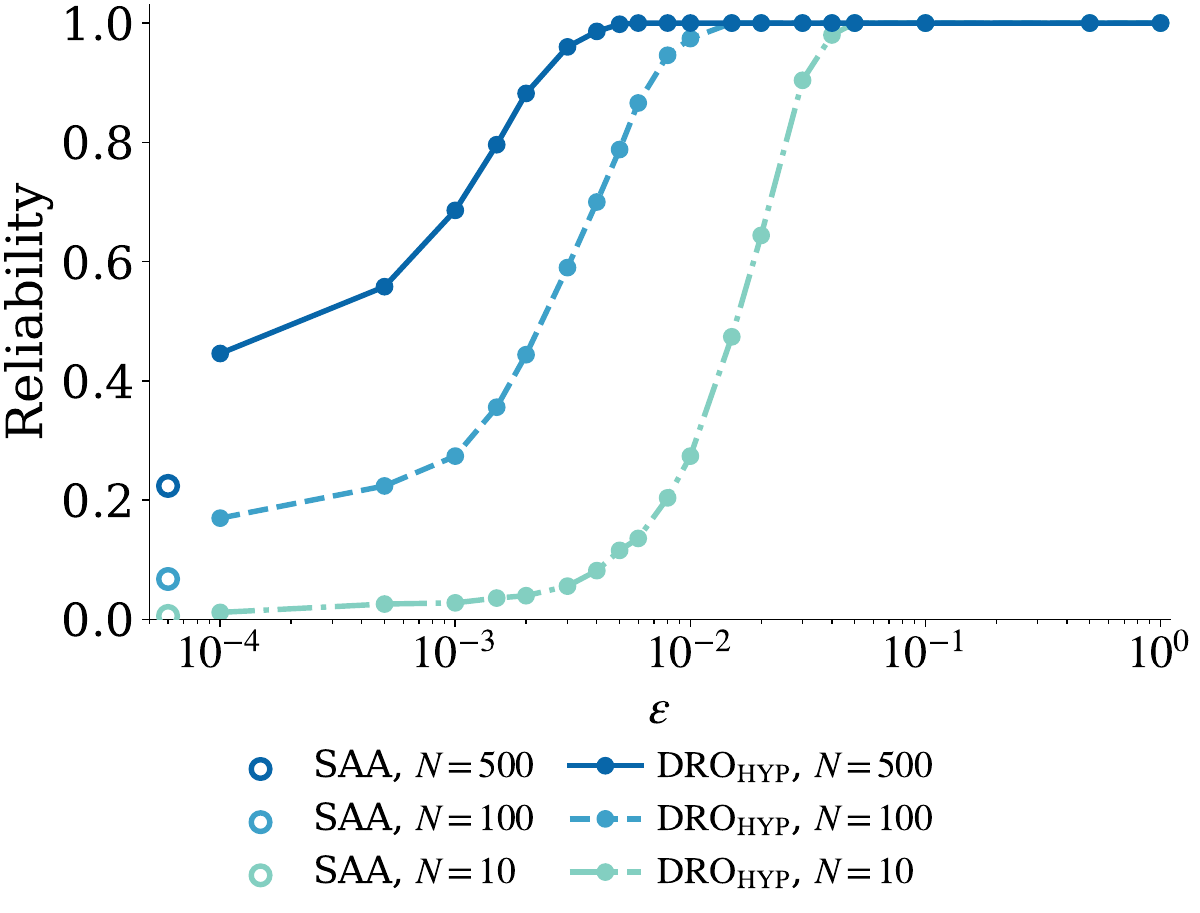}
    \caption{Reliability versus $\varepsilon$.}
    \label{figure: reliability}
\end{figure}

\paragraph{Practical Considerations}
    Increasing $\varepsilon$ introduces a trade-off between expected growth and reliability. In these experiments, small-to-moderate radii yield higher out-of-sample performance, whereas larger radii provide greater reliability. Accordingly, investors can select $\varepsilon$ to balance growth and reliability according to their preferences.

\section{Conclusion}
\label{Section: Conclusion}
    This paper developed a certified framework for high-dimensional Wasserstein robust expected-utility portfolio optimization. Starting from the semi-infinite dual reformulation, we combined a supporting-hyperplane utility surrogate with duality over compact polyhedral return supports to obtain a finite hyperplane--dual convex program. Under an $\ell_1$ ground norm and polyhedral portfolio constraints, the resulting formulation is a single linear program whose numbers of variables and constraints grow polynomially with the problem dimensions.

    The formulation is certified: the uniform utility-approximation tolerance bounds both the robust-value error and the near-optimality gap for the original Wasserstein DRO objective.
    For box support, we also established a finite-threshold large-ambiguity characterization with a closed-form optimizer set. Computational studies showed that $\texttt{DRO}_{\texttt{HYP}}$ was faster than the exact constraint-generation benchmark at every tested dimension up to $n=1000$, while the observed robust-value gaps remained below the prescribed approximation tolerance. They further demonstrated monthly S\&P~500-scale rebalancing and quantified the robustness--performance trade-off governed by the Wasserstein radius.



\medskip
\noindent\textbf{Acknowledgment.}
{
		This paper was supported in part by the National Science and Technology Council (NSTC), Taiwan, under Grants: NSTC113--2628--E--007--015-- and NSTC114--2628--E-007--006--.
}

\normalsize
\clearpage
\appendix
\renewcommand{\thetheorem}{\Alph{section}.\arabic{theorem}}

\medskip
\section{Technical Proofs}
\label{appendix: technical proofs}

\begin{proof}[Proof of Proposition~\ref{prop: exact sample vertex reformulation}] 
    By Lemma~\ref{lemma: A Convex Reformulation of the DRO Problem}, the DRO problem~\eqref{eq: DRO problem} is equivalent to~\eqref{eq: convex DRO}. 
    Fix $w\in\mathcal{W}$, $\lambda\geq0$, and $j \in \{1,\dots, N\}$, and define a shorthand
    \[
    \phi_j(x):=
    U(\langle w,x\rangle)+\lambda\|x-\widehat{x}_j\|_1.
    \]
    For any $x\in\mathfrak{X}$, let
    $\widetilde x:=\min\{x, \widehat{x}_j\}$ componentwise. Since
    $w\geq0$ and $U(\cdot)$ is increasing,
    $U(\langle w,\widetilde x\rangle)\leq U(\langle w,x\rangle)$.
    Moreover,
    $\|\widetilde x-\widehat{x}_j\|_1
    \leq\|x-\widehat{x}_j\|_1$.
    Thus, the minimum of $\phi_j$ over $\mathfrak{X}$ can be restricted to
    $$
    [x_{\min},\widehat{x}_j]:=\{x \in \mathbb{R}^n: x_{\min} \leq x \leq \widehat{x}_j \}.
    $$
     That is,
    \[
    \min_{x \in \mathfrak{X}} \phi_j(x) = \min_{x \in [x_{\min}, \widehat{x}_j]} \phi_j(x).
    \]
    On this restricted box $[x_{\min},\widehat{x}_j]$, it follows that
    $
    \|x-\widehat{x}_j\|_1
        = \langle \mathbf 1,\widehat{x}_j-x\rangle,
    $
    which is affine in $x$. Hence $\phi_j$ is concave on
    $[x_{\min},\widehat{x}_j]$, and its minimum is attained at an extreme
    point of this box. These extreme points are precisely the elements of
    $\mathcal V_j$.

    Indeed, if \(x=\sum_{\ell}\theta_\ell v^\ell\) is a convex
    combination of vertices of \([x_{\min},\widehat{x}_j]\), then concavity gives
    \[
    \phi_j(x) = \phi_j \left( \sum_{\ell}\theta_\ell v^\ell \right)
    \geq
    \sum_\ell \theta_\ell \phi_j(v^\ell)
    \geq
    \min_\ell \phi_j(v^\ell).
    \]
    Hence,
    \[
    \min_{x\in[x_{\min},\widehat{x}_j]}\phi_j(x) = \min_{v\in\mathcal V_j}\phi_j(v).
    \]
    Substitution into~\eqref{eq: convex DRO} proves the result.
\end{proof}

\begin{proof}[Proof of Theorem~\ref{theorem: DRO via hyperplane}]
    Recall that
    $
    \widehat{U}_{M_\eta^*}(y)
        := \min_{1\leq m \leq M_\eta^*}(\alpha_m y+\beta_m)
    $
    is a supporting-hyperplane surrogate of $U$ with uniform error at most~$\eta$ on $[\underline{y},\overline{y}]$. By applying the semi-infinite reformulation in Lemma~\ref{lemma: A Convex Reformulation of the DRO Problem} to the surrogate Wasserstein DRO problem~\ref{problem: supporting hyperplane DRO surrogate}, we obtain
    \begin{align}
        &\begin{cases}
            \displaystyle\sup_{w \in \mathcal{W}, \lambda \geq 0, a_j, \forall j} - \lambda \varepsilon +  \frac{1}{N} \sum_{j=1}^{N} a_j\\
            {\rm s.t.} \;  
                \displaystyle \inf_{x \in \mathfrak{X}} \left[
                    \min_{1\leq m \leq M_\eta^*} \left( \alpha_m \langle w ,x \rangle + \beta_m \right) 
                + 
                \lambda \| x-\widehat{x}_j \| \right] \geq a_j, \\
                \qquad j \leq N.
        \end{cases} \nonumber
        \\
        &=\begin{cases}
            \displaystyle \sup_{w \in \mathcal{W}, \lambda \geq 0, a_j, \forall j} - \lambda \varepsilon +  \frac{1}{N} \sum_{j=1}^{N} a_j\\
            {\rm s.t.} \;  
                \displaystyle \min_{1 \leq m \leq M_\eta^*} \inf_{x \in \mathfrak{X}} \left[ \alpha_m \langle w ,x \rangle + \beta_m  
                + 
                \lambda \| x-\widehat{x}_j \| \right] \geq a_j, \\
                \qquad j \leq N.
        \end{cases} \nonumber
        \\
        &=\begin{cases}
            \displaystyle\sup_{w \in \mathcal{W}, \lambda \geq 0, a_j} - \lambda \varepsilon +  \frac{1}{N} \sum_{j=1}^{N} a_j\\
            {\rm s.t.} \;
                \displaystyle\inf_{x \in \mathfrak{X}} \alpha_m \langle w ,x \rangle + \beta_m  + \lambda  \| x - \widehat{x}_j \|  \geq a_j, \\
                \qquad j \leq N, \ m \leq M_\eta^*.
        \end{cases} \label{eq: hyperplane over m} 
    \end{align}
    The last equivalence follows since the lower bound on the finite minimum over $m$ is equivalent to the same lower bound holding for each $m=1,\dots,M_\eta^*$.

    Using the fact that the double dual of a norm equals the original norm in a finite-dimensional space, i.e., $\|a\|_{**} = \|a\|$ for $a \in \mathbb{R}^n$, together with the dual-norm identity $\max_{z: \|z\|_* \leq \lambda} \langle z, v \rangle = \lambda \|v\|$, we rewrite~\eqref{eq: hyperplane over m} as 
    \begin{align}
        &
    \begin{cases}
        \displaystyle\sup_{w \in \mathcal{W}, \lambda \geq 0, a_j, \forall j} 
            - \lambda \varepsilon +  \frac{1}{N} \sum_{j=1}^{N} a_j\\
        {\rm s.t.} \;  
            \displaystyle\inf_{x \in \mathfrak{X}}  \alpha_m \langle w, x \rangle + \beta_m  
            + 
            \max_{z_j^m: \|z_j^m\|_* \leq \lambda} \langle z_j^m, x-\widehat{x}_j \rangle \geq a_j, \\
            \qquad  j \leq N, \ m \leq M_\eta^*.
    \end{cases}\nonumber
    \\
    & =
    \begin{cases}
        \displaystyle\sup_{w \in \mathcal{W}, \lambda \geq 0, a_j, \forall j} - \lambda \varepsilon +  \frac{1}{N} \sum_{j=1}^{N} a_j\\
        {\rm s.t.} \;  
            \displaystyle\max_{z_j^m: \| z_j^m \|_* \leq \lambda}\inf_{ x \in \mathfrak{X}}  \alpha_m \langle w ,x \rangle + \beta_m  + \langle z_j^m, x-\widehat{x}_j \rangle \geq a_j, \\
            \qquad  j \leq N, \ m \leq M_\eta^*.
    \end{cases}\label{eq: minmax ineq} 
    \end{align}
    The interchange of minimum and maximum in~\eqref{eq: minmax ineq} follows from Sion's minimax theorem; see \cite[Proposition~5.5.4]{bertsekas2009convex}.\footnote{
        Indeed, for fixed $w$, $\lambda$,~$j$, and $m$, the function 
            $
            \alpha_m \langle w ,x \rangle + \beta_m  + \langle z_j^m, x-\widehat{x}_j \rangle
            $
        is affine (hence convex) in~$x$ and affine (hence concave) in~$z_j^m$.
        Moreover, the set $\mathfrak{X}$ is compact and convex, and for each fixed finite $\lambda \geq 0$, the set $\{ z_j^m \in \mathbb{R}^n : \|z_j^m\|_* \leq \lambda \}$ is also a compact convex set.
        Thus, Sion's minimax theorem applies.
    } 
    For each sample--hyperplane pair $(j,m)$, the maximum over~$z_j^m$ is attained on the compact dual-norm ball. Therefore, the constraint
    \[
    \max_{\|z_j^m\|_* \leq \lambda} \, \inf_{x \in \mathfrak{X}} F_{jm}(x, z_j^m)\geq a_j
    \]
    where 
    $
    F_{jm}(x, z_j^m):=\alpha_m \langle w ,x \rangle + \beta_m  + \langle z_j^m, x-\widehat{x}_j \rangle
    $
    is equivalent to the existence of a vector $z_j^m$ such~that
    \[
    \|z_j^m\|_*\leq\lambda
    \; \text{ and } \; 
    \inf_{x\in\mathfrak{X}}F_{jm}(x,z_j^m)\geq a_j.
    \]
    Introducing these auxiliary variables yields:
    \begin{align}
        & 
        \begin{cases}
            \displaystyle\sup_{w \in \mathcal{W}, \lambda \geq 0, a_j, z_j^m, \forall j, m} - \lambda \varepsilon +  \frac{1}{N} \sum_{j=1}^{N} a_j\\
            {\rm s.t.} \;  
                \displaystyle\inf_{x \in \mathfrak{X}}   \alpha_m \langle w ,x \rangle + \beta_m  +  \langle z_j^m, x-\widehat{x}_j \rangle  \geq a_j, \\
                \qquad \quad j \leq  N, \ m \leq  M_\eta^*;\\
                \quad\;\; \| z_j^m \|_* \leq \lambda, \quad j \leq  N, \ m \leq M_\eta^*.
        \end{cases} \nonumber \\
        &=
         \begin{cases}
            \displaystyle\sup_{w \in \mathcal{W}, \lambda \geq 0, a_j, z_j^m, \forall j, m} - \lambda \varepsilon +  \frac{1}{N} \sum_{j=1}^{N} a_j\\
            {\rm s.t.} \;  
                \displaystyle\min_{x \in \mathfrak{X}}   \alpha_m \langle w ,x \rangle + \beta_m  +  \langle z_j^m, x-\widehat{x}_j \rangle  \geq a_j, \\
                \qquad \quad j \leq  N, \ m \leq  M_\eta^*;\\
                \quad\;\; \| z_j^m \|_* \leq \lambda, \quad j \leq N, \ m \leq M_\eta^*.
        \end{cases} \label{eq: linear program}
    \end{align}
    Equality~\eqref{eq: linear program} follows from the fact that the map $x \mapsto \alpha_m \langle w, x \rangle + \beta_m + \langle z_j^m, x - \widehat{x}_j \rangle$ is continuous in $x$ over the compact set $\mathfrak{X}$; hence, by the Weierstrass extreme value theorem, the infimum is attained. 
     
    To simplify the minimization over $x$ in the constraints of~\eqref{eq: linear program}, we use the polyhedral representation
    $
    \mathfrak{X} 
    = \{x \in \mathbb{R}^n : H x \leq h \}.
    $
    For each~$j=1,\ldots,N$ and $m=1,\ldots,M_\eta^*$, the inner affine minimization over $\mathfrak{X}$ is
    \begin{align}
        \min_{x\in\mathfrak{X}}
        \big(
            \alpha_m\langle w,x\rangle
            +
            \langle z_j^m,x\rangle
        \big)
        =
        \begin{cases}
            \displaystyle
            \min_x \ \langle \alpha_m w+z_j^m,x\rangle\\
            {\rm s.t.}\quad Hx\le h .
        \end{cases}
        \label{eq: constraint in polytope}
    \end{align}
For each sample--hyperplane pair $(j, m)$, introduce the multiplier $\mu_j^m \in \mathbb{R}^{r_{\mathfrak{X}}}_+$ associated with the constraints $Hx \leq h$. The corresponding Lagrangian is
\begin{align}
    \mathcal L(x,\mu_j^m)
    &= \langle \alpha_m w+z_j^m,x\rangle
        + \langle \mu_j^m,Hx-h\rangle \notag\\
    &= \langle \alpha_m w+z_j^m+H^\top\mu_j^m,x\rangle
        - \langle \mu_j^m,h\rangle
\end{align}
where the last equality uses the adjoint identity $\langle \mu_j^m, Hx \rangle = \langle H^\top \mu_j^m,x\rangle$. Therefore,
\[
    \inf_x \mathcal L(x,\mu_j^m)
    =
    \begin{cases}
    -\langle \mu_j^m,h\rangle, & \alpha_m w+z_j^m+H^\top\mu_j^m=0,\\
    -\infty, & \text{otherwise}.
    \end{cases}
\]
By strong LP duality, for each fixed sample--hyperplane pair~$(j,m)$, 
\begin{align*}
& \min_{x\in\mathfrak{X}}
\left[
    \alpha_m\langle w,x\rangle+\beta_m
    +
    \langle z_j^m,x-\widehat{x}_j\rangle
\right] 
 \\
 &=
\beta_m-\langle z_j^m,\widehat{x}_j\rangle
+
\max_{\mu_j^m\geq0}\inf_x \mathcal L(x,\mu_j^m)\\
& =
\beta_m-\langle z_j^m,\widehat{x}_j\rangle + \max_{\mu_j^m\geq0}
\{ 
    -\langle \mu_j^m,h\rangle:
    \alpha_m w+z_j^m+H^\top\mu_j^m=0
\}\\
& =
\max_{\mu_j^m\geq0}
\{
    \beta_m-\langle z_j^m,\widehat{x}_j\rangle
    -\langle \mu_j^m,h\rangle:
    \alpha_m w+z_j^m+H^\top\mu_j^m=0
\}.
\end{align*}
Hence the constraint in~\eqref{eq: linear program} is equivalent to the existence of
$\mu_j^m\geq0$ such~that
$
    \alpha_m w+z_j^m+H^\top\mu_j^m=0
$
and
$
    \beta_m-\langle z_j^m,\widehat{x}_j\rangle
    -\langle \mu_j^m,h\rangle
    \geq a_j .
$
Eliminating $z_j^m$ through the equality constraint yields
\[
    \alpha_m\langle w,\widehat{x}_j\rangle+\beta_m
    +
    \langle \mu_j^m,H\widehat{x}_j-h\rangle
    \geq a_j .
\]
Additionally, the norm constraint $\|z_j^m\|_*\leq\lambda$ becomes
$$
    \|-\alpha_m w-H^\top\mu_j^m\|_*\leq \lambda.
$$
Thus the problem becomes
\begin{align}\label{eq: linear program (dual)}
    &\displaystyle
    \sup_{\substack{
    w\in\mathcal{W},\ \lambda\ge0,\\
    a_j,\;
    \mu_j^m\geq 0 \; \forall j,m
    }}
    -\lambda\varepsilon+\frac1N\sum_{j=1}^N a_j \\
    {\rm s.t.}\quad
        &\alpha_m\langle w,\widehat{x}_j\rangle+\beta_m
        +\langle \mu_j^m,H\widehat{x}_j-h\rangle
        \geq a_j,
         \qquad j\leq N,\ m\leq M_\eta^*, \notag\\
        &\|-\alpha_m w-H^\top\mu_j^m\|_*
        \leq \lambda,\qquad j\le N,\ m\le M_\eta^*. \notag
\end{align}
    To complete the proof, it remains to identify the computational class of~\eqref{eq: linear program (dual)}.
The objective is affine, and the first constraint is affine in the decision variables.
The second constraint is
\[
    \|-\alpha_m w-H^\top\mu_j^m\|_q \leq \lambda,
\]
which is a convex norm constraint.
Hence~\eqref{eq: linear program (dual)} is a finite convex program.

    If $p=1$, then $q=\infty$, and the norm constraint is equivalent to the linear inequalities
    \[
        -\lambda
        \leq
        -\alpha_m w_i - (H^\top\mu_j^m)_i
        \leq
        \lambda,
        \quad i=1,\dots,n,
    \]
    where $(H^\top\mu_j^m)_i = \sum_{\ell=1}^{r_{\mathfrak{X}}} H_{\ell i}\, (\mu_j^m)_\ell$, denotes the $i$th component of $H^\top \mu_j^m \in \mathbb{R}^n$.
    Therefore, when $\mathcal{W}$ is polyhedral, the formulation is a robust linear program for the $\ell_1$ ground~norm.
\end{proof}

\begin{proof}[Proof of Corollary~\ref{corollary: box-specialized HYP reformulation}]
The box support is a special case of~\eqref{eq: compact polyhedron} with
\[
    H :=
    \begin{bmatrix}
        I\\-I
    \end{bmatrix},
    \qquad
    h :=
    \begin{bmatrix}
        x_{\max}\\-x_{\min}
    \end{bmatrix}.
\]
For each sample--hyperplane pair $(j,m)$, write the support-dual variable in Theorem~\ref{theorem: DRO via hyperplane} as
\[
    \mu_j^m=
    \begin{bmatrix}
        \gamma_j^m\\ \nu_j^m
    \end{bmatrix},
    \qquad
    \gamma_j^m,\nu_j^m\in\mathbb R_+^n,
\]
where $\gamma_j^m$ and $\nu_j^m$ correspond to the upper and lower
support bounds, respectively. The constraints in
Theorem~\ref{theorem: DRO via hyperplane} become
\begin{align}
    &\alpha_m\langle w,\widehat x_j\rangle+\beta_m
    -\langle\gamma_j^m,x_{\max}-\widehat x_j\rangle
    -\langle\nu_j^m,\widehat x_j-x_{\min}\rangle
    \geq a_j,
    \label{eq: box dual value}\\
    &\|-\alpha_m w-\gamma_j^m+\nu_j^m\|_\infty
    \leq\lambda.
    \label{eq: box dual norm}
\end{align}
The $\ell_\infty$-norm constraint~\eqref{eq: box dual norm} is equivalent to the component-wise inequality
\begin{align} \label{ineq: equivalent ell_inf norm constraint}
    -\lambda\mathbf 1
    \leq
    -\alpha_m w-\gamma_j^m+\nu_j^m
    \leq
    \lambda\mathbf 1.
\end{align}
The left inequality, together with $\gamma_j^m\geq\mathbf 0$, implies
\begin{align} \label{ineq: equivalent ell_inf norm constraint reduction}
    \nu_j^m
    \geq \alpha_m w+\gamma_j^m-\lambda\mathbf 1
    \geq
    \alpha_m w-\lambda\mathbf 1.  
\end{align}
Additionally, since
$
x_{\max}-\widehat x_j\geq\mathbf 0
$
and
$
\widehat x_j-x_{\min}\geq\mathbf 0,
$
the lower bound on $\nu_j^m$ in~\eqref{ineq: equivalent ell_inf norm constraint reduction} implies that the left-hand side of~\eqref{eq: box dual value} is maximized by
\begin{align*} 
    \gamma_j^m=\mathbf{0},
    \qquad
    \nu_j^m=  \max\{ \alpha_m w-\lambda\mathbf 1, 0\}.
\end{align*}
It is readily verified that this choice of multipliers also satisfies~\eqref{eq: box dual norm}. Moreover, it is
independent of the sample index~$j$.
Finally, introduce auxiliary variables $s^m\in\mathbb R_+^n$ satisfying
$
    s^m\geq\alpha_m w-\lambda\mathbf1.
$
Since $\widehat x_j-x_{\min}\geq\mathbf0$ and $s^m$
enters the robust-value constraint through
$
-\langle\widehat x_j-x_{\min},s^m\rangle,
$
any value larger than
$\max\{ \alpha_m w-\lambda\mathbf 1, 0\}$ can only tighten the constraint.
Hence, the smallest feasible choice is attained without loss of
generality, and substitution into~\eqref{eq: box dual value} yields
\eqref{eq: box-specialized HYP reformulation}.
\end{proof}

\begin{proof}[Proof of Proposition~\ref{prop: robust value approximation}]
    For any $w \in \mathcal{W}$, $x \in \mathfrak{X}$, we have $\langle w, x \rangle \in [\underline{y},\overline{y}]$. Hence, by the uniform error bound~\eqref{ineq: eta-uniform bound}, it follows that
    \[
    U(\langle w, x\rangle)
        \leq \widehat{U}_{M_\eta^*}(\langle w,x\rangle)
        \leq U(\langle w,x\rangle) + \eta .
    \]
    Taking expectation with respect to any $\mathbb{F} \in \mathcal{B}_\varepsilon(\widehat{\mathbb{F}})$ gives
    \[
    \mathbb{E}^{\mathbb{F}} \left[U(\langle w, X\rangle)\right]
        \leq \mathbb{E}^{\mathbb{F}} \left[\widehat{U}_{M_\eta^*}(\langle w,X\rangle)\right]
        \leq \mathbb{E}^{\mathbb{F}} \left[U(\langle w,X\rangle)\right]+\eta .
    \]
    Taking the infimum over
    $\mathbb{F}\in\mathcal{B}_\varepsilon(\widehat{\mathbb{F}})$ preserves these inequalities, so for every $w\in\mathcal{W}$,
    \[
    \phi(w)
    \leq \widehat\phi_\eta(w)
    \leq \phi(w)+\eta,
    \]
    where
    \[
    \phi(w):=
    \inf_{\mathbb{F}\in\mathcal{B}_\varepsilon(\widehat{\mathbb{F}})}
    \mathbb E^{\mathbb{F}}
    \left[
    U(\langle w,X\rangle)
    \right]
    \]
    and $\widehat\phi_\eta(w)$ is defined analogously with $U$ replaced by $\widehat{U}_{M_\eta^*}$.
    Taking the supremum over $w \in \mathcal{W}$ yields
    \[
    V^\star(\varepsilon)
    \leq
    \widehat V_{\eta}^\star(\varepsilon)
    \leq
    V^\star(\varepsilon)+ \eta.
    \]
    Finally, if $\widehat w_\eta$ optimizes the surrogate problem, then $\widehat V_{\eta}^\star(\varepsilon) =\widehat\phi_\eta(\widehat{w}_\eta) $ and we have
    \[
    \phi(\widehat{w}_\eta)
    \geq \underbrace{ \widehat\phi_\eta(\widehat{w}_\eta)-\eta}_{  
        = \widehat V_{\eta}^\star(\varepsilon) -\eta}
    \geq V^\star(\varepsilon)-\eta,
    \]
    which proves the near-optimality claim.
\end{proof}

\begin{proof}[Proof of Corollary~\ref{corollary: out-of-sample reliability}]
    On the event
    $d_p(\mathbb F,\widehat{\mathbb F}_N)\leq\varepsilon$, we have
    $\mathbb F\in\mathcal B_\varepsilon(\widehat{\mathbb F}_N)$. Therefore,
    \[
    \begin{aligned}
    g\bigl(\widehat w_{\eta,N}(\varepsilon)\bigr)
    &= \mathbb E^{\mathbb F}[U(\langle \widehat w_{\eta,N}(\varepsilon), X\rangle)]\\
    &\geq
    \inf_{\mathbb Q\in\mathcal B_\varepsilon(\widehat{\mathbb F}_N)}
    \mathbb E^{\mathbb Q}
    \left[
    U\bigl(\langle\widehat w_{\eta,N}(\varepsilon),X\rangle\bigr)
    \right]\\
    &\geq
    \widehat V_{\eta,N}^{\star}(\varepsilon)-\eta,
    \end{aligned}
    \]
    where the second inequality follows from
    Proposition~\ref{prop: robust value approximation}. Taking probabilities
    establishes the result.
\end{proof}

\begin{proof}[Proof of Theorem~\ref{theorem: large ambiguity optimality}] 
For any $\mathbb{F}\in\mathcal M(\mathfrak{X})$, both $\mathbb{F}$ and $\widehat{\mathbb{F}}$ are supported on $\mathfrak{X}$. Hence, by coupling them
arbitrarily,
\[
d_p(\mathbb{F},\widehat{\mathbb{F}})
    \leq \sup_{x, x' \in \mathfrak{X}}\|x - x'\| 
    = \bar{\varepsilon}.
\]
Therefore, for every $\varepsilon \geq \bar{\varepsilon}$,
$
\mathcal{B}_\varepsilon(\widehat{\mathbb{F}})=\mathcal{M}(\mathfrak{X}).
$
Consequently, for each fixed $w\in\Delta_n$ and $\varepsilon\geq\bar\varepsilon$,
\begin{align*}
\inf_{\mathbb{F}\in\mathcal{B}_\varepsilon(\widehat{\mathbb{F}})}
\mathbb E^{\mathbb{F}}\!\left[U(\langle w,X\rangle)\right]
&=
\min_{\mathbb{F}\in\mathcal M(\mathfrak{X})}
\mathbb E^{\mathbb{F}}\!\left[U(\langle w,X\rangle)\right]\\
&=
\min_{x\in\mathfrak{X}}U(\langle w,x\rangle),
\end{align*}
where the last equality follows because a worst-case distribution may be chosen
as a Dirac measure concentrated at a minimizer.

Since $U(\cdot)$ is strictly increasing, we have
  \begin{align*}
        \min_{x \in \mathfrak{X}} U( \langle w, x \rangle )   
         = U \left( \min_{x \in \mathfrak{X}} \langle w, x \rangle   \right),
    \end{align*}
and therefore
\[
\arg\max_{w\in\Delta_n}\min_{x\in\mathfrak{X}} U(\langle w,x\rangle)
=
\arg\max_{w\in\Delta_n}\min_{x\in\mathfrak{X}} \langle w, x \rangle.
\]
Since $w\geq 0$ and $x_i \geq x_{\min, i}$, we have $\sum_{i=1}^n w_i x_i \geq \sum_{i=1}^n w_i x_{\min, i} = \langle w, x_{\min} \rangle $ and the worst case over the box is attained at $x=x_{\min}$:
\[
\min_{x \in \mathfrak{X}} \langle w, x \rangle = \langle w, x_{\min} \rangle
\]
Thus, for every $\varepsilon\geq\bar\varepsilon$, the robust problem has the same optimal solution set as
$
\max_{w\in\Delta_n}\langle w,x_{\min}\rangle .
$
Note that
\begin{align*}
\langle w, x_{\min} \rangle
=
\sum_{i=1}^n w_i x_{\min,i}
\leq
\max_{1\leq k\leq n} x_{\min,k} \sum_{i=1}^n w_i
=
c^*.
\end{align*}
Moreover,
\[
\langle w, x_{\min} \rangle = c^* - c^* + \sum_{i=1}^n w_i x_{\min,i} = c^* - \sum_{i=1}^n w_i (c^* - x_{\min,i}).
\]
Since $w_i\ge0$ and $c^*-x_{\min,i}\ge0$ for all $i$, equality holds if and only if
\[
w_i(c^*-x_{\min,i}) = 0 \quad i=1,\dots,n
\]
Equivalently,
$
w_i = 0$ for all $i \notin \mathcal{I}^*$.
Therefore, we have
\begin{align*}
    \arg\max_{w \in \mathcal{W}} \langle w, x_{\min} \rangle 
    & = \{w \in \mathcal{W}: w_i = 0 \; \text{for all $i \notin \mathcal{I}^*$}\} \\
    &= \operatorname{conv}\{e_i \in \mathbb{R}^n : i \in \mathcal{I}^*\}.
\end{align*}
If $\mathcal I^*=\{i^*\}$, then
\[
W^*=\operatorname{conv}\{e_{i^*}\}=\{e_{i^*}\}.
\]
If $x_{\min}=c\mathbf 1$, then $c^*=c$ and
$\mathcal I^*=\{1,\dots,n\}$, so
\[
W^*
=
\operatorname{conv}\{e_1,\dots,e_n\}
=
\Delta_n.
\]
Therefore, every feasible portfolio is optimal. In particular, the equal-weight portfolio is optimal, but not uniquely selected.
\end{proof}

\begin{proof}[Proof of Corollary~\ref{corollary: Large-Ambiguity Optimality of the Hyperplane Approximation}]
    Define the hyperplane surrogate
    \[
    \widehat{U}_{M_\eta^*}(y):=\min_{1\leq m\leq M_\eta^*}(\alpha_m y+\beta_m).
    \]
    By construction, the finite formulation in Theorem~\ref{theorem: DRO via hyperplane}
    is obtained by replacing $U$ with $\widehat{U}_{M_\eta^*}$ in the semi-infinite convex reformulation.
    Since $\alpha_m>0$ for all $m=1,\dots,M_\eta^*$, let $\alpha_{\min}:=\min_{1\leq m\leq M_\eta^*}\alpha_m>0$.
    For any $y_2>y_1$,
    \[
    \begin{aligned}
    \widehat{U}_{M_\eta^*}(y_2)
    &=
    \min_{1 \leq m \leq M_\eta^*} \{\alpha_m y_1+\beta_m+\alpha_m(y_2-y_1)\}\\
    &\geq \min_{1 \leq m \leq M_\eta^*}\{\alpha_m y_1+\beta_m\}
    +
    \alpha_{\min}(y_2-y_1)\\
    &>
    \widehat{U}_{M_\eta^*}(y_1).
    \end{aligned}
    \]
    Hence, $\widehat{U}_{M_\eta^*}$ is strictly increasing. Moreover, for every
    $\varepsilon\geq\bar\varepsilon$, we have
    $\mathcal{B}_\varepsilon(\widehat{\mathbb{F}})=\mathcal M(\mathfrak{X})$.
    Therefore, the hyperplane-approximation surrogate problem~\ref{problem: supporting hyperplane DRO surrogate} reduces to
    \[
    \max_{w \in \Delta_n} \, \min_{x \in \mathfrak{X}} \widehat{U}_{M_\eta^*} (\langle w, x\rangle).
    \]
    Because $\widehat{U}_{M_\eta^*}$ is strictly increasing,
    \[
    \arg\max_{w \in \Delta_n}\min_{x\in\mathfrak{X}} \widehat{U}_{M_\eta^*}(\langle w,x\rangle)
    =
    \arg\max_{w\in\Delta_n}\min_{x\in\mathfrak{X}}\langle w,x\rangle.
    \]
    Since $w\geq \mathbf 0$ and
    $
    \mathfrak{X}=\mathfrak{X}_{\rm box},
    $
    we have
    \[
    \min_{x\in\mathfrak{X}}\langle w,x\rangle
    =
    \langle w,x_{\min}\rangle.
    \]
    The result now follows directly from Theorem~\ref{theorem: large ambiguity optimality}.
\end{proof}

\medskip
\section{Constraint-Generation Implementation of the Exact Benchmark}
\label{appendix: exact constraint generation}

This appendix describes the implementation of $\texttt{DRO}_{\texttt{EX-CG}}$ used in Section~\ref{sec: Computational Scalability}.
The method specializes the standard constraint-generation principle \cite{hettich1993semi} to the exact sample-specific vertex reformulation in Proposition~\ref{prop: exact sample vertex reformulation}. 
At each iteration, it solves a restricted master problem over the currently active sample--vertex constraints and then, for each sample $j$, solves the separation problem over $\mathcal V_j$ to identify violated constraints. The procedure terminates when no violation exceeds the prescribed numerical~tolerance.

\begin{lemma}[Exact Prefix-Vertex Reduction]
\label{lemma: prefix separation}
Under the conditions of
Proposition~\ref{prop: exact sample vertex reformulation}, fix
$w\in\mathcal W$, $\lambda\geq0$, and
$\widehat x_j\in\mathfrak{X}$. Let $\pi:=(\pi_1,\dots,\pi_n)$ be a permutation of $\{1,\dots,n\}$ satisfying
$
    w_{\pi_1}\geq w_{\pi_2}\geq\cdots\geq w_{\pi_n},
$
and define the nested prefix sets
\[
    S_k:=\{\pi_1,\dots,\pi_k\},
    \qquad k=0,\dots,n,
\]
where $S_0=\varnothing$. For each $k$, let $v_j^k\in\mathcal V_j$
be the vertex whose $i$th coordinate equals $(x_{\min})_i$ if
$i\in S_k$ and $(\widehat x_j)_i$ otherwise. Then
\begin{align*}
    \min_{v\in\mathcal V_j}
    \left\{
        U(\langle w,v\rangle) +\lambda\|v-\widehat x_j\|_1
    \right\} 
     =
    \min_{0\leq k\leq n}
    \left\{
        U(\langle w,v_j^k\rangle) +\lambda\|v_j^k-\widehat x_j\|_1
    \right\}.
\end{align*}
\end{lemma}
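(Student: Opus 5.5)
Each vertex $v\in\mathcal V_j$ is determined by the set $S\subseteq\{1,\dots,n\}$ of coordinates where $v_i=(x_{\min})_i$. Write $d_i:=(\widehat x_j)_i-(x_{\min})_i\geq0$, which holds because $\widehat x_j\in\mathfrak{X}$. For such a vertex,
\[
\langle w,v\rangle=\langle w,\widehat x_j\rangle-\sum_{i\in S}w_id_i,\qquad \|v-\widehat x_j\|_1=\sum_{i\in S}d_i .
\]
The objective therefore becomes a set function $\Phi(S):=U(c-A(S))+\lambda B(S)$, with $c:=\langle w,\widehat x_j\rangle$, $A(S):=\sum_{i\in S}w_id_i$ and $B(S):=\sum_{i\in S}d_i$. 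The inequality ``$\leq$'' of the lemma is trivial, since every $v_j^k$ lies in $\mathcal V_j$. The work is to show that some minimizer over all $2^n$ subsets can be taken to be a prefix $S_k$.

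The intended route is a continuous relaxation rather than a combinatorial exchange. Following the proof of Proposition~\ref{prop: exact sample vertex reformulation}, the minimum of $\phi_j$ over $\mathcal V_j$ equals its minimum over the box $[x_{\min},\widehat x_j]$. Parametrize this box by $t\in[0,1]^n$ through $x_i=(\widehat x_j)_i-t_id_i$. Then $\phi_j=U(c-\langle t,a\rangle)+\lambda\langle t,d\rangle$, where $a_i:=w_id_i$. Fix the $\ell_1$ budget $\langle t,d\rangle=\beta\in[0,\sum_i d_i]$. Because $U$ is increasing, minimizing $\phi_j$ at this budget amounts to maximizing $\langle t,a\rangle=\sum_i w_i(t_id_i)$ subject to $\sum_i t_id_i=\beta$ and $0\leq t_id_i\leq d_i$. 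This is a fractional knapsack in the variables $u_i:=t_id_i$ with per-unit values $w_i$. Its greedy solution fills coordinates in decreasing order of $w_i$, which is exactly the order $\pi$. So the optimum is $u=\sum_{l\le k}d_{\pi_l}e_{\pi_l}+\theta\, d_{\pi_{k+1}}e_{\pi_{k+1}}$ for some $k$ and some $\theta\in[0,1]$. Coordinates with $d_i=0$ contribute nothing and can be ignored, and ties in $w$ are harmless because any tie-breaking consistent with $\pi$ attains the same value.

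This reduces the box minimization to the union of the segments $[v_j^k,v_j^{k+1}]$, $k=0,\dots,n-1$. Along each segment, $\phi_j$ is $U$ of an affine function of $\theta$ plus an affine function of $\theta$, so it is concave in $\theta$. Its minimum is therefore attained at an endpoint, which is a prefix vertex. Combining the steps gives
\[
\min_{\mathcal V_j}\phi_j=\min_{[x_{\min},\widehat x_j]}\phi_j=\min_k\min_{\theta}\phi_j\geq\min_k\phi_j(v_j^k),
\]
and with the trivial ``$\leq$'' this proves the claimed equality.

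The main obstacle is justifying the knapsack step cleanly. The plan is to verify greedy optimality by a direct exchange argument: any feasible $u$ that puts mass on a lower-$w$ coordinate while a higher-$w$ coordinate is unsaturated can move that mass upward, keeping $\sum_i u_i$ fixed and not decreasing $\sum_i w_iu_i$. This simultaneously handles ties and zero widths $d_i$.
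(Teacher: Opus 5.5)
Your proof is correct and takes essentially the same route as the paper: you fix the transport budget, solve the resulting fractional knapsack greedily in the order $\pi$ by an exchange argument, and then use concavity of the objective along each segment $[v_j^k,v_j^{k+1}]$ to reach a prefix vertex. The only difference is cosmetic. You compare every point of the box $[x_{\min},\widehat x_j]$ with the greedy point at the same budget, citing the proof of Proposition~\ref{prop: exact sample vertex reformulation}, although the inclusion $\mathcal V_j\subseteq[x_{\min},\widehat x_j]$ already gives the direction you need. The paper instead compares each subset $S$ directly with the greedy point at budget $D_j(S)$.
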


\begin{proof}[Proof of Lemma~\ref{lemma: prefix separation}]
    Fix the sample~$j$ and define
$
    d_{ji}:=(\widehat x_j)_i-(x_{\min})_i\geq0.
$
For every vertex $v\in\mathcal V_j$, there exists a subset
$S\subseteq\{1,\dots,n\}$ such that
\[
v_i=
\begin{cases}
	(x_{\min})_i, & i\in S,\\
	(\widehat x_j)_i, & i\notin S.
\end{cases}
\]
Define its transport distance $D_j$ and portfolio return reduction $L_j$ by
\[
    D_j(S):=\sum_{i\in S}d_{ji},
    \qquad
    L_j(S):=\sum_{i\in S}w_i d_{ji}.
\]
Then, for the vertex $v$ represented by $S$, it is readily verified that
\[
\langle w,v\rangle=\langle w,\widehat x_j\rangle - L_j(S),
\qquad
\|v-\widehat x_j\|_1 = D_j(S).
\]
Consequently, the minimization over $\mathcal V_j$ can be
written equivalently as
\begin{align}
    &\min_{v\in\mathcal V_j}
    \left\{
        U(\langle w,v\rangle)
        +\lambda\|v-\widehat x_j\|_1
    \right\}
   =
    \min_{S\subseteq\{1,\dots,n\}}
    \left\{
        U\bigl(\langle w,\widehat x_j\rangle-L_j(S)\bigr)
        +\lambda D_j(S)
    \right\}.
    \label{eq: subset separation problem}
\end{align}

To analyze the minimization on the right-hand side, fix a total
transportation distance
$
    D\in\left[0, \; \sum_{i=1}^n d_{ji}\right].
$
Among subsets satisfying $D_j(S)=D$, the transportation term
$\lambda D$ is constant. Since $U$ is increasing, minimizing~\eqref{eq: subset separation problem} at this fixed distance amounts to maximizing the
corresponding return reduction $L_j(S)$. We therefore introduce continuous variables $z_i\in[0,d_{ji}]$,
where $z_i$ denotes the transportation distance assigned to
coordinate~$i$, and consider
\[
L_j^\star(D)
:=
\max\left\{
\sum_{i=1}^n w_i z_i:
\sum_{i=1}^n z_i=D,\;
0\leq z_i\leq d_{ji}
\right\},
\]
which is a fractional knapsack problem.
Suppose that a feasible $z$ admits indices $i,\ell \in \{1,\dots,n\}$ such that
\[
w_i>w_\ell,\qquad z_i<d_{ji},\qquad z_\ell>0.
\]
For any
$
0<\xi\leq\min\{d_{ji}-z_i,z_\ell\},
$
transferring $\xi$ from $z_\ell$ to $z_i$ preserves constraint
$
\sum_{r=1}^n z_r=D
$
and increases the objective by $\xi(w_i-w_\ell)>0$. Hence, an
optimizer can be chosen to fill the coordinates in the order~$\pi$, with ties broken
arbitrarily.
Thus, for every feasible $D$, there exist
$k\in\{0,\dots,n-1\}$ and $\theta\in[0,1]$ such that an optimizer
satisfies
\[
z_{\pi_r}=d_{j\pi_r}\quad(r=1,\dots,k),\qquad
z_{\pi_{k+1}}=\theta d_{j\pi_{k+1}},
\]
and $z_{\pi_r}=0$ for $r\geq k+2$. Consequently,
\[
\begin{aligned}
D
&=(1-\theta)D_j(S_k)+\theta D_j(S_{k+1}),\\
L_j^\star(D)
&=(1-\theta)L_j(S_k)+\theta L_j(S_{k+1}).
\end{aligned}
\]

The graph of $L_j^\star$ therefore consists of line
segments joining the consecutive prefix points
\[
\bigl(D_j(S_k),L_j(S_k)\bigr) = \Bigl(\sum_{r=1}^k d_{j \pi_r}, \; \sum_{r=1}^k w_{\pi_r} d_{j \pi_r} \Bigr),
\qquad k=0,\dots,n
\]
with $(D_j(S_0), L_j(S_0))=(0,0).$
Now fix any subset~$S$. The choice
$
z_i=d_{ji}\mathbf 1_{\{i\in S\}}
$
is feasible at $D=D_j(S)$ and attains $L_j(S)$. Thus,
\[
L_j^\star(D_j(S))\geq L_j(S).
\]
Since $U$ is increasing, it follows that
\[
U\bigl(\langle w,\widehat x_j\rangle-L_j^\star(D_j(S))\bigr)
+\lambda D_j(S)
\leq
U\bigl(\langle w,\widehat x_j\rangle-L_j(S)\bigr)
+\lambda D_j(S).
\]

The point
$
\bigl(D_j(S),L_j^\star(D_j(S))\bigr)
$
lies on a segment joining two consecutive prefix points. Along this
segment, $D$ and $L_j^\star(D)$ vary affinely. Since $U$ is concave,
$
U\bigl(\langle w,\widehat x_j\rangle-L_j^\star(D)\bigr)+\lambda D
$
is concave in the segment parameter and therefore attains its minimum
at an endpoint. Hence, for every subset~$S$, some prefix set~$S_k$
has an objective value no greater than that of~$S$.

Thus, the minimum over the prefix sets is no greater than the minimum
over all subsets. The reverse inequality follows because every prefix
set is itself a feasible subset.
\end{proof}

\medskip
\section{Transaction-Cost Sensitivity of the Rolling Experiment}
\label{appendix: transaction cost sensitivity}

This appendix supplements the fixed 476-asset rolling experiment in
Section~\ref{sec: Multiple-Period Case} above. We
apply proportional transaction-cost rates of $0.1\%$, $0.2\%$, and
$0.3\%$ to risky-asset turnover at each monthly rebalancing date.
The costs are assessed ex post and are not included in the
portfolio-optimization objective. The buy-and-hold benchmarks incur
no subsequent rebalancing turnover.

Figure~\ref{figure: supplement transaction cost trajectories} reports
the resulting account-value trajectories, and
Table~\ref{table: supplement transaction cost sensitivity} gives the
associated performance metrics. Increasing transaction costs reduces cumulative returns, Sharpe ratios, and Calmar ratios across the rebalanced portfolios, while volatility and maximum drawdown remain comparatively stable.
The effect is most pronounced for small values of~$\varepsilon$, whose
allocations rebalance more aggressively. The portfolio with
$\varepsilon=10^{-2}$ retains the highest Sharpe and Calmar ratios among the tested
rebalanced policies at every cost level. At $\varepsilon=1$, the
allocation is already close to equal weight, but periodic rebalancing
still incurs costs from weight drift and therefore underperforms the
equal-weight buy-and-hold benchmark.

\begin{figure}[htbp]
    \centering
    \includegraphics[width=.7\linewidth]{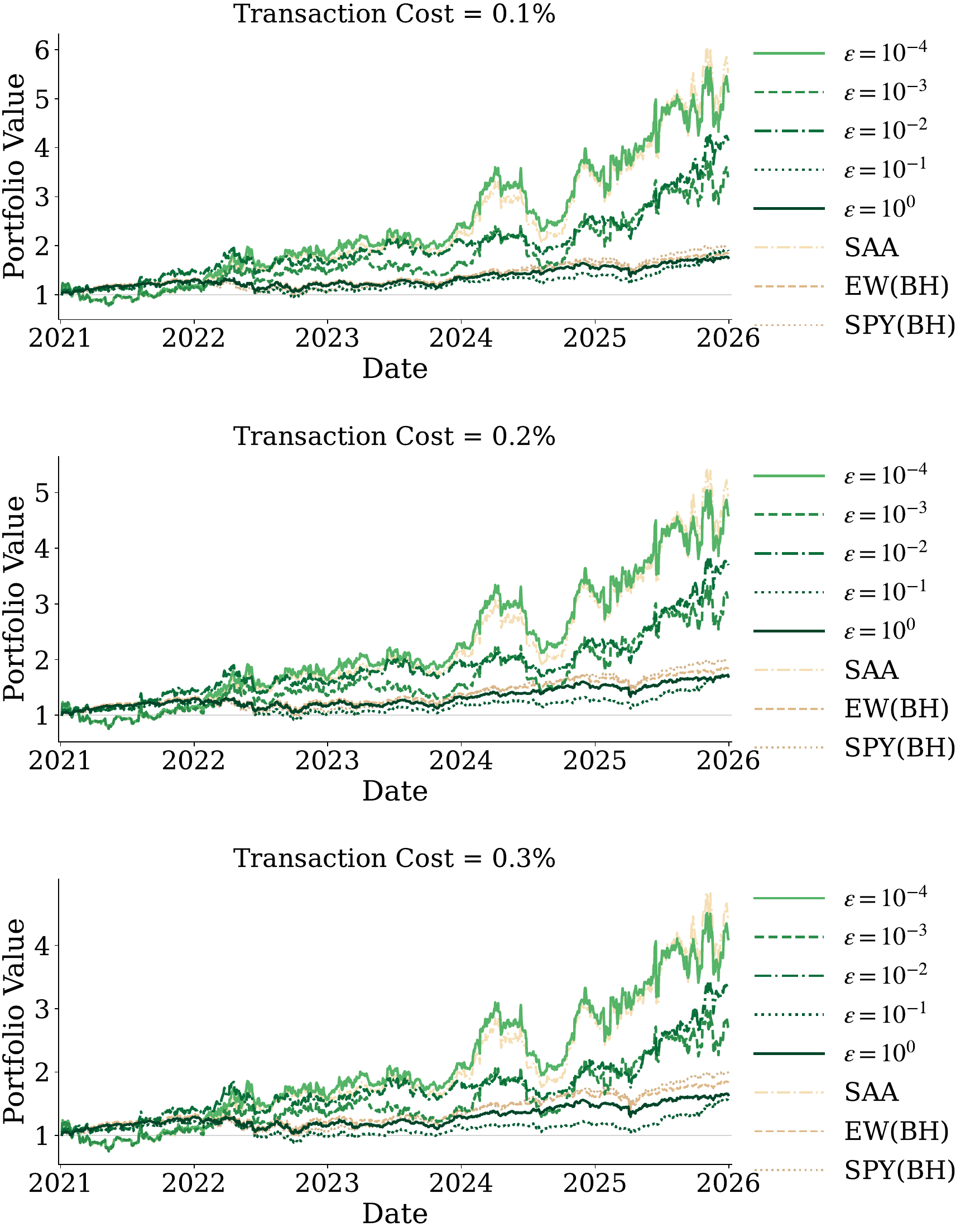}
    \caption{Out-of-sample account trajectories for the fixed 476-asset rolling portfolio under transaction-cost rates of $0.1\%$, $0.2\%$, and $0.3\%$ over 2021--2025.}
    \label{figure: supplement transaction cost trajectories}
\end{figure}

\begin{table}[htbp]
\footnotesize
\centering
\caption{Transaction-cost sensitivity of the rebalanced portfolios over 2021--2025.}
\label{table: supplement transaction cost sensitivity}
\renewcommand{\arraystretch}{1.08}
\begin{tabular}{@{}llccccc@{}}
\toprule
TC & Portfolio & CR & $\sigma$ & SR & MDD & Calmar \\
\midrule
0.1\%
& \texttt{SAA}          & 4.52 & 0.46 & 0.90 & 0.39 & 1.05 \\
& $\varepsilon=10^{-4}$ & 4.15 & 0.45 & 0.88 & 0.38 & 1.02 \\
& $\varepsilon=10^{-3}$ & 2.40 & 0.44 & 0.71 & 0.39 & 0.71 \\
& $\varepsilon=10^{-2}$ & 3.14 & 0.29 & 1.03 & 0.27 & 1.21 \\
& $\varepsilon=10^{-1}$ & 0.89 & 0.19 & 0.60 & 0.29 & 0.47 \\
& $\varepsilon=10^{0}$  & 0.75 & 0.16 & 0.59 & 0.20 & 0.59 \\
\midrule
0.2\%
& \texttt{SAA}          & 3.93 & 0.46 & 0.85 & 0.39 & 0.96 \\
& $\varepsilon=10^{-4}$ & 3.60 & 0.45 & 0.83 & 0.39 & 0.92 \\
& $\varepsilon=10^{-3}$ & 2.03 & 0.44 & 0.66 & 0.39 & 0.63 \\
& $\varepsilon=10^{-2}$ & 2.70 & 0.29 & 0.95 & 0.27 & 1.09 \\
& $\varepsilon=10^{-1}$ & 0.71 & 0.19 & 0.49 & 0.30 & 0.38 \\
& $\varepsilon=10^{0}$  & 0.70 & 0.16 & 0.54 & 0.20 & 0.55 \\
\midrule
0.3\%
& \texttt{SAA}          & 3.40 & 0.46 & 0.80 & 0.40 & 0.87 \\
& $\varepsilon=10^{-4}$ & 3.10 & 0.45 & 0.78 & 0.39 & 0.83 \\
& $\varepsilon=10^{-3}$ & 1.70 & 0.44 & 0.60 & 0.40 & 0.55 \\
& $\varepsilon=10^{-2}$ & 2.30 & 0.29 & 0.87 & 0.28 & 0.98 \\
& $\varepsilon=10^{-1}$ & 0.54 & 0.19 & 0.38 & 0.30 & 0.30 \\
& $\varepsilon=10^{0}$  & 0.64 & 0.16 & 0.50 & 0.21 & 0.50 \\
\midrule
\multicolumn{7}{@{}l}{\emph{Buy-and-hold benchmarks}}\\
-- & \texttt{EW(BH)}  & 0.83 & 0.16 & 0.63 & 0.20 & 0.66 \\
-- & \texttt{SPY(BH)} & 0.98 & 0.17 & 0.70 & 0.25 & 0.60 \\
\bottomrule
\end{tabular}

\vspace{0.3em}
\parbox{0.95\columnwidth}{\footnotesize
\emph{Note:} The no-transaction-cost results are reported in
Table~\ref{table: Mult Performance Metrics of SP500} above. The \texttt{SAA} row corresponds to the $\varepsilon=0$
endpoint. Transaction costs are applied only to turnover from
rebalanced risky-asset positions.
}
\end{table}

\clearpage


\small

\setlength{\bibsep}{5pt}
\bibliographystyle{apalike}
\bibliography{references}

\end{document}